\long\def\@makecaption#1#2{\ifx\@captype\@IEEEtablestring%
\footnotesize\begin{center}{\normalfont\footnotesize #1}\\
{\normalfont\footnotesize\scshape #2}\end{center}%
\@IEEEtablecaptionsepspace
\else
\@IEEEfigurecaptionsepspace
\setbox\@tempboxa\hbox{\normalfont\footnotesize {#1.}~~ #2}%
\ifdim \wd\@tempboxa >\hsize%
\setbox\@tempboxa\hbox{\normalfont\footnotesize {#1.}~~ }%
\parbox[t]{\hsize}{\normalfont\footnotesize \noindent\unhbox\@tempboxa#2}%
\else
\hbox to\hsize{\normalfont\footnotesize\hfil\box\@tempboxa\hfil}\fi\fi}
\algnewcommand{\Initialize}[1]{%
  \State \textbf{Initialize:}
  \Statex \hspace*{\algorithmicindent}\parbox[t]{.8\linewidth}{\raggedright #1}
}
\newcommand{\Eset}{\mathbb{E}}
\newcommand{\Rset}{\mathbb{R}}
\newcommand{\Fcal}{{\cal F}}
\newcommand{\Gcal}{{\cal G}}
\newcommand{\Lcal}{{\cal L}}
\newcommand{\Vcal}{{\cal V}}
\newcommand{\Xcal}{{\cal X}}
\newcommand{\Psf}{\sf{P}}
\newcommand{\Abf}{{\bf A}}
\newcommand{\Ibf}{{\bf I}}
\newcommand{\vbf}{{\bf v}}
\newcommand{\xbf}{{\bf x}}
\newcommand{\1}{{\mathbf{1}}}
\newcommand{\bld}{{\boldsymbol{\lambda}}}
\renewcommand{\v}[1]{{\mathbf{#1}}}
\newtheorem{thm}{Theorem}
\newtheorem{lem}{Lemma}
\newtheorem{cor}{Corollary}
\newtheorem{remark}{Remark}
\newtheorem{notation}{\bf{Notation}}
\newtheorem{assump}{Assumption}
\title{\LARGE \bf
Distributed Resource Allocation Over Dynamic Networks with Uncertainty
}
\author{Thinh T. Doan\thanks{Thinh T. Doan is with the H. Milton Stewart School of
Industrial and Systems Engineering, the Georgia Institute of Technology,  Atlanta, GA, USA
        {\tt\small ttdoan2@illinois.edu}} and Carolyn L. Beck\thanks{Carolyn L. Beck is with the Department of Industrial and Enterprise Systems Engineering, University of Illinois,  Urbana, IL, USA
        {\tt\small beck3@illinois.edu}}}
\date{}
\begin{document}
\maketitle

\begin{abstract}
Motivated by broad applications in various fields of engineering, we study a network resource allocation problem where the goal is to optimally allocate a fixed quantity of resources over a network of nodes. We consider large scale networks with complex interconnection structures, thus any solution must be implemented in parallel and based only on local data resulting in a need for distributed algorithms. In this paper, we study a distributed Lagrangian method for such problems. By utilizing the so-called distributed subgradient methods to solve the dual problem, our approach eliminates the need for central coordination in updating the dual variables, which is often required in classic Lagrangian methods. Our focus is to understand the performance of this distributed algorithm when the number of resources is unknown and may be time-varying. In particular, we obtain an upper bound on the convergence rate of the algorithm to the optimal value, in expectation, as a function of the topology of the underlying network. The effectiveness of the proposed method is demonstrated by its application to the economic dispatch problem in power systems, with simulations completed  on the benchmark IEEE-14 and IEEE-118 bus test systems.
\end{abstract}

%!TEX root = allocation_TAC.tex

\section{Introduction}\label{sec:intro}
Motivated by numerous applications in engineering, we consider an optimization problem, defined over a network of $n$ nodes, of the form	

\begin{subequations}  
\label{prob:main}
\begin{empheq}[left={\sf P} \ : \ \empheqlbrace]{alignat=5}
& {\underset{x_1,x_2,\ldots,x_n}{\text{minimize}}}
	 &&  \sum_{i=1}^n f_i(x_i), \notag\\
 & \text{subject to}   \quad
	&& x_i \in \Xcal_i, \\
	&&& \sum_{i=1}^n (x_i - b_i) = 0, \label{prob:couple_const} 
	\end{empheq}
\end{subequations}
where $f_i:\Rset\rightarrow\Rset$ is a proper convex function and $\Xcal_i\subset\Rset$ is a compact convex set, which are known only by node $i$. Here $b_i$ is some constant, which is initially assigned at node $i$. We assume that each node $i$ is an agent with computational capabilities that can communicate with other agents (referred to as node $i$'s neighbors) connected via a given graph $\Gcal$, with interconnection structure defined by an adjacency matrix, $\Abf(k)$. Here $k$ is a time index, i.e., the graph may be either fixed or time-varying. We are interested in distributed algorithms for solving problem ${\Psf}$, meaning that each node is only allowed to send/exchange messages with its neighbors.    

Problem {\Psf} is often referred to as a network resource allocation problem, where the goal is to optimally allocate a fixed quantity of resource $\sum_{i=1}^n b_i$ over a network of nodes. Each node $i$ suffers a cost given by function $f_i$ of the amount of resource $x_i$ allocated to it. The goal of this problem is to seek an optimal allocation such that the total cost $\sum_{i=1}^n f_i(x_i)$ incurred over the network is minimized while satisfying the nodes' local constraints, i.e., $x_i\in\Xcal_i$. Often problem ${\Psf}$ is described in terms of utility functions, where each function is the nodes' utility and the goal is to maximize the total utility. This problem is traditionally solved by a central coordinator that can observe the utilities or loss functions of all nodes. Such a central coordinator, however, is undesirable and frequently unavailable for two major reasons: (1) the network is too large with too complex an interconnection structure; and (2) the nodes are
geographically distributed and have heterogeneous objectives. These reasons necessitate distributed solution architectures that will allow one to bypass the use of a central coordinator. Our focus, therefore, is to study distributed algorithms for solving problem ${\Psf}$, which can be implemented in parallel and do not require any central coordination.

Network resource allocation is a fundamental and important problem that arises in a variety of application domains within engineering. One standard example is the problem of congestion control where the global objective is to route and schedule information in a large-scale internet network such that a fair resource allocation between users is achieved \cite{Srikant2004}. Another example is coverage control problems in wireless sensor networks, where the goal is to optimally allocate a large number of sensors to an unknown environment such that the coverage area is maximized \cite{Cortes2004,SharmaSB2012}. Furthermore, resource allocation may be viewed as a simplification of the important economic dispatch problem in power systems, wherein geographically distributed generators of electricity must coordinate to meet a fixed demand while maintaining the stability of the system \cite{StevenLow2014,Dorfler2015,DoanBB2017}.

\subsection{Related work}
The study of optimization problems of the form of problem  $\Psf$ has a long history and has received much interest. A decentralized approach to solve for such problems via the so-called \textit{Lagrangian method} can be found in standard texts; for example, see \cite{Bertsekas1999, SYbook2014, Srikant2004}.  In this approach, one constructs a Lagrangian function for problem {\Psf} and sequentially updates the primal and dual variables. Due to the structure of {\Psf} the primal variables can be updated in a decentralized fashion; however, a central coordinator is required to update and distribute the dual variables to the nodes, making this approach not fully distributed.

On the other hand, the first algorithm which could be implemented in a distributed manner was the ``center-free'' method studied in \cite{HoServiSuri80} where the authors consider a relaxation of {\Psf}, that is, the nodes' local constraints are not considered. The term ``center-free'' was originally meant to refer to the absence of any central coordinator. The work in  \cite{HoServiSuri80} has lead to a number of subsequent studies \cite{XiaoBoyd06, LakshmananDeFarias08,  Necoara13, Doan17}, with the main focus on analyzing the performance of the algorithm and its variants for this relaxed problem. The primary idea of these algorithms is based on necessary and sufficient optimality conditions, namely a consensus condition on the nodes' derivatives, and a feasibility condition on the total number of resources \cite{XiaoBoyd06}.

Although distributed solutions of the relaxed problem are well-studied, their applications are limited due to the unrealistic assumption that there are no local constraints on the nodes. For example, in economic dispatch problems these local constraints, which represent the limited capacity of generators, are inevitable. Motivated by the necessary and sufficient optimality conditions of the relaxed problems in \cite{XiaoBoyd06}, there are a number of recent results on distributed methods for problem $\Psf$. In particular, the authors in \cite{DominguezGarcia2012, Yang2013, Binetti2014, Kar2012, Xing2015} use these conditions to study economic dispatch problems where objective functions are assumed to be quadratic. The authors in  \cite{CortesChekuri2015, LakshmananDeFarias08} relax the assumption on quadratic costs to convex cost functions with Lipschitz continuous gradients, and  consider relaxed problems by using appropriate penalty functions for the nodes' local constraints. In a similar approach, the authors in \cite{NedicOS2017} consider problem \eqref{prob:main} with general non-smooth convex cost functions and propose a method with a convergence rate $o(1/k)$ where $k$ is the number of iterations.       

Recently, in simultaneous work presented in \cite{DoanB2016} and \cite{Yang2016}, a distributed Lagrangian method is proposed for problem ${\Psf}$. The hallmark of this approach is the elimination of the need for a central coordinator to update the dual variables, where these authors employ the distributed subgradient method presented in \cite{Nedic2009} to solve the dual problem of ${\Psf}$. In particular, in \cite{DoanB2016}, a distributed Lagrangian algorithm for problem ${\Psf}$ on an undirected and static network is proposed. The authors in \cite{Yang2016} consider the same approach for the case of time-varying directed networks. However, the work in \cite{Yang2016} requires an assumption of strict convexity of the objective functions, whereas in \cite{DoanB2016} it is assumed only that the functions are convex. We also note some related work is presented in \cite{Aybat1, Aybat2}, in which distributed primal-dual methods with conic constraints are considered.

\subsection{Contribution of this work}

Previous approaches that have been proposed to solve problem ${\Psf}$ assume the total number of resources is constant. This critical assumption is impractical in most applications. For example, in power systems load demands are typically time-varying and the data defining such load demands may be uncertain \cite{Zhu2008}.  For this reason, any solution to network resource allocation problems should be robust to  uncertainty. This issue has not been addressed in the literature. Therefore, our main contribution in this paper is to address this question. In particular, motivated by our earlier work \cite{DoanB2016}, we design a distributed stochastic Lagrangian method to solve ${\Psf}$ when the constants $b_i$ are unknown and may be time-varying. We further provide an upper bound on the convergence rate of the method in expectation on the size and topology of the underlying networks.

Specifically, our primary contributions are summarized as follows.
\begin{itemize}[leftmargin = 3mm]
\item We first study a distributed Lagrangian method for problem ${\Psf}$, where the total quantity of resource is assumed to be constant over time-varying networks, thus generalizing our preliminary results as presented in \cite{DoanB2016}. The development and analysis in this case allows for an extension to the case where this quantity is uncertain. 

\item We then propose a distributed stochastic Lagrangian approach for problem ${\Psf}$ for the case where the constants $b_i$ are unknown and may be time-varying. We show that our approach is robust to this uncertainty, that is, our stochastic method achieves an asymptotic convergence in expectation to the optimal value. Moreover, we show that our method converges with rate $\mathcal{O}(n\ln(k)/\delta\sqrt{k})$, where $\delta$ is a parameter representing spectral properties of the graph structure underlying the connectivity of the nodes, $n$ is the number of nodes, and $k$ is the number of iterations. 
\end{itemize}
In addition, to illustrate the effectiveness of the proposed methods we present numerical results from applications to economic dispatch problems using the benchmark IEEE-14 and IEEE-118 bus test systems for three case studies.

We note that distributed algorithms are proposed in \cite{Kar2012} aimed at solving the economic dispatch problem under time-varying loads.  In our results herein, we allow for the functions $f_i$ to be more general (convex, rather than quadratic), and for the loads to be of a stochastic nature.  More importantly, using the algorithm we propose, we further provide the aforementioned convergence rates, where no such rates have been proven for earlier approaches.

The remainder of this paper is organized as follows.  We study a distributed Lagrangian method for problem ${\Psf}$ in Section \ref{sec:DLM}, while a stochastic version of this method is given for ${\Psf}$ under uncertainty in Section \ref{sec:DSLM}. In Section \ref{sec:Simulation}, we apply our proposed method to the economic dispatch problem on the benchmark IEEE-14 and IEEE-118 bus test systems, specifically for three case studies. We conclude the paper with a brief discussion of potential extensions in Section \ref{sec:Conclusion}. Finally, for an ease of exposition we provide our technical analysis for Sections \ref{sec:DLM} and \ref{sec:DSLM} in the supplementary document.  

\begin{notation}
We use boldface to distinguish between vectors $\mathbf{x}$ in $\mathbb{R}^n$ and scalars $x$ in $\mathbb{R}$. Given $\mathbf{x}\in\mathbb{R}^n$, let $\|\mathbf{x}\|$ denote its Euclidean norm. Moreover, we denote by $\mathbf{1}$ and $\Ibf$ the vector whose entries are all $1$ and the identity matrix, respectively.

Given a nonsmooth convex function $f:\Rset\rightarrow\Rset$, we denote by $\partial f(x)$ its subdifferential estimated at $x$, i.e., $$\partial f(x) \triangleq \{g\in\Rset\,|\, f(y) \geq f(x) + g(y-x), \; \forall\, y\in\Rset \},$$ the set of subgradients of $f$ at $x$, which is nonempty. In this paper, we denote by $g(x)$ a subgradient of $f$ estimated at $x$. Moreover, $f$ is $C$-Lipschitz continuous if and only if 
\begin{align*}
|\,f(x)-f(y)\,| \leq C\,|\,x-y\,|,\quad \forall\; x,y\in \Rset. 
\end{align*}   
Also, the Lipschitz continuity of $f$ is equivalent to the condition that the subgradients of $f$ are uniformly bounded by $C$ \cite[Lemma 2.6]{ShalevShwartz2012}. 

Finally, given any function $f:\Xcal\rightarrow\Rset$, where $\Xcal$ is convex, we denote by $\tilde{f}$ its extended value function, i.e., 
\begin{align*}
\tilde{f}(x) = \left\{\begin{array}{ll}
f(x) & x\in\Xcal\\
\infty  & \text{else}.  
\end{array}\right.
\end{align*}  
The Fenchel conjugate $\tilde{f}^*$ of $\tilde{f}$ is then given by
\begin{align*}
\tilde{f}^*(u) &= \underset{x\in\Rset}{\sup}\,\{\,ux - \tilde{f}(x)\,\}= \underset{x\in\Xcal}{\max}\,\{\,ux - f(x)\,\},
\end{align*}
which is always convex.
\end{notation}

\begin{remark}
In this paper we focus on the scalar case, i.e., $x_i \in \Rset$.  We note that the results herein could be extended in a straightforward manner to the vector case, $x_i \in \Rset^d$, along similar lines as those given in \cite{DoanBB2017}.
\end{remark}

%!TEX root = allocation_TAC.tex
\section{Distributed Lagrangian Methods}\label{sec:DLM}

Lagrangian methods have been widely used to construct a decentralized framework for problem {\Psf}, where each node in the network only has partial knowledge of the objective function and the constraints; this approach requires a central coordinator to update and distribute the Lagrange multiplier to the nodes. In this section, we present an alternative approach that allows us to bypass the need for a central coordinator, that is, our proposed approach allows for a truly distributed implementation, leading to a more efficient algorithm. This development also informs our approach on distributed stochastic Lagrangian methods for network resource allocation problems, which is presented in Section \ref{sec:DSLM}.

\subsection{Main Algorithm}\label{subsec_DLM:alg}
We start this section by explaining the mechanics of our approach. In particular, consider the following Lagrangian function ${\Lcal:\Rset^n\times\Rset\rightarrow\Rset}$ of ${\Psf}$ 
\begin{align}
\Lcal(\xbf,\lambda) := \sum_{i=1}^n f_i(x_i) + \lambda\left(\sum_{i=1}^n (x_i - b_i)\right),\label{alg:Lagrangian}
\end{align} % fix alignment here
where $\lambda\in\Rset$ is the Lagrangian multiplier associated with the coupling constraint \eqref{prob:couple_const}. The dual  function $d:\Rset\rightarrow\Rset$ of problem ${\Psf}$ for a given $\lambda$ is then defined as
\begin{align}
d(\lambda) &:= \underset{\xbf\in\Xcal}{\text{min}}\left\{ \sum_{i=1}^n f_i(x_i) + \lambda \left(\sum_{i=1}^n (x_i-b_i)\right) \right\}\nonumber\\
&=  \sum_{i=1}^n \Big(\,\underset{x_i\in\mathcal{X}_i}{\text{min}}\Big\{ \,f_i(x_i) + \lambda x_i\Big\}\, \Big)-\lambda\sum_{i=1}^n b_i\nonumber\\
&=  \sum_{i=1}^n \Big(\, - \underset{x_i\in\mathcal{X}_i}{\text{max}}\Big\{ \, - f_i(x_i) - \lambda x_i\Big\}\, \Big)-\lambda\sum_{i=1}^n b_i\nonumber\\
&= \sum_{i=1}^n \Big(\,-\tilde{f}_i^*(-\lambda)-\lambda b_i\,\Big).\label{alg:dual}
\end{align} 
The dual problem of ${\Psf}$, denoted by ${\sf DP}$, is given by
\begin{align}
{\sf DP}\ : \ \underset{\lambda\in\Rset}{\text{max}} \left\{\sum_{i=1}^n \Big(\,-\tilde{f}_i^*(-\lambda)-\lambda b_i\Big)\,\right\},\nonumber	
\end{align} 
which is then equivalent to solving 
\begin{align}
\underset{\lambda\in\Rset}{\text{min}}\; q(\lambda) \triangleq \sum_{i=1}^n\, \underbrace{\tilde{f}_i^*(-\lambda)\ + \ \lambda b_i}_{=q_i(\lambda)}, \label{alg:mindual}
\end{align}
where each $q_i \ : \ \Rset\rightarrow\Rset$ is convex since $\tilde{f}_i^*$ is convex. Moreover, the subgradient $g_i$ of $q_i$ is given as \cite{Bertsekas2003}
\begin{align}
g_i(\tilde{x}_i) = b_i - \tilde{x}_i,\label{alg:subg_qi}
\end{align} 
where $\tilde{x}_i$ satisfies  
\begin{align*}
\tilde{x}_i \in \arg\min_{x\in\Xcal_i} f_i(x_i) + \lambda (x_i-b_i).
\end{align*}
In the sequel, we denote by $$\Xcal \triangleq \left(\Xcal_1\times\Xcal_2\times\ldots\times\Xcal_n\right).$$ We consider the following Slater's condition to guarantee for the strong duality of problem {\sf P}.  

\begin{assump}[Slater's condition \cite{Bertsekas2003}]\label{assump:Slater}
There exists a point $\tilde{\xbf}$ in the relative interior of $\Xcal$ such that $\sum_{i=1}^n \tilde{x}_i = b$.
\end{assump}

For solving {\sf P}, standard Lagrangian methods \cite{Bertsekas2003} apply subgradient methods to update $\lambda$, which require a central coordinator to collect all the subgradients $g_i$ of the functions $q_i$. The nodes then use $\lambda$ distributed by this central coordinator, to update their variables $x_i$. However, such a central coordinator is not allowed in distributed frameworks, which motivates us to study distributed variants of Lagrangian methods. 

Indeed, the key idea of our approach is to eliminate the requirement of the central coordinator by utilizing the distributed consensus-based subgradient method presented in \cite{Nedic2009} to compute the solution of Eq.\ \eqref{alg:mindual}. In particular, we have each node $i$ stores a local copy $\lambda_i$ of $\lambda$. Each node $i$ then iteratively updates $\lambda_i$ upon communicating with its neighbors, where the goal of the nodes is to drive every $\lambda_i$ to a solution of {\sf DP} (a.k.a Eq.\ \eqref{alg:mindual}). In addition, each node $i$ utilizes its $\lambda_i$ to update its primal variable $x_i$, resulting in the distributed Lagrangian method, formally presented in Algorithm \ref{alg:DLM}. Here, Eq.\ \eqref{DLM:stepConsensus} is often referred to as a consensus step, while the term $b_i - x_i(k)$ in Eq.\ \eqref{DLM:stepSubgradient} is a ``local subgradient" of $q_i$ at $x_i(k)$ in Eq.\ \eqref{alg:subg_qi}. In addition, $a_{ij}(k)$ are the weights which node $i$ assigns for $\lambda_j$ received from node $j$ at time $k$. 

The updates in Algorithm \ref{alg:DLM} have a simple implementation: first, at time $k\geq 0$, each node $i$  broadcasts the current value of $\lambda_i(k)$ to its neighbors. Node $i$ computes $v_i$, as given by the weighted average of the current local copies of its neighbors and its own value. The updates of $x_i(k+1)$ and $\lambda_i(k+1)$ then do not require any additional communications among the nodes at step $k$. Finally, we note that our method maintains the feasibility of the nodes' local constraints at every iteration, i.e., $x_i(k)\in\mathcal{X}_i$ for all $k\geq 0$.

\begin{algorithm}
	\caption{Distributed Lagrangian Method for solving {\Psf}}
	\label{alg:DLM}
		1. \textbf{Initialize}: Each node $i$ initializes $\lambda_i(0) \in \Rset$.\\
2. \textbf{Iteration}: For $k\geq 0$, each node $i$ executes
\begin{align}
&v_i(k+1) = \sum_{j=1}^n a_{ij}(k) \lambda_j (k)  \label{DLM:stepConsensus}\\
&x_i(k+1) \in \arg\underset{x_i\in \Xcal_i}{\min}  f_i(x_i) + v_i(k+1) (x_i - b_i)\label{DLM:stepMinimize}\\
&\lambda_i(k+1) =  v_i(k+1)  + \alpha(k) \left(x_i (k+1)  -  b_i \right).\label{DLM:stepSubgradient}
\end{align}
\end{algorithm}

Regarding the network topology and inter-node communications, we assume that each node is only allowed to interact with neighbors that are directly connected to it through a sequence of time-varying undirected graphs. Specifically, we assume we are given a sequence of  undirected graphs $\mathcal{G}(k) = (\mathcal{V},\mathcal{E}(k))$ with $\mathcal{V} = \{1,\ldots,n\}$; nodes $i$ and $j$ can exchange messages at time $k$ if and only if $(i,j) \in \mathcal{E}(k)$. Denote by $\mathcal{N}_i(k)$ the neighboring set of node $i$ at time $k$. We make the following fairly standard assumption, which ensures the long-term connectivity of the network.
\begin{assump}\label{assump:BConnectivity}
There exists an integer $B\geq1$ such that the following graph is connected for all integers $\ell\geq0:$ 
\begin{equation}
(\mathcal{V},\mathcal{E}(\ell B)\cup \mathcal{E}(\ell B+1)\cup\ldots\cup \mathcal{E}((\ell +1)B-1)).
\end{equation}
\end{assump}
Intuitively this assumption ensures that all nodes can influence each other through repeated interactions with neighbors in the graph sequence $\mathcal{G}(k)$. We note this assumption is considerably weaker than requiring each $\mathcal{G}(k)$ to be connected for all $k\geq0$. In addition, we denote by $\Abf(k)$ the matrix whose $(i,j)$-th entries are the weights $a_{ij}(k)$ given in Eq.\ \eqref{DLM:stepConsensus}. We assume that $\Abf(k)$, which captures the topology of $\Gcal(k)$, satisfies the following conditions.
\begin{assump}\label{assump:DoubStocMatrix}
There exists a positive constant $\beta$ such that $\Abf(k)$ satisfies the following conditions for all $k\geq0$:
\begin{itemize}
\item[(a)] $a_{ii}(k)\geq\beta,$ for all $i$.
\item[(b)] $a_{ij}(k)\in[\beta,1]$ if $(i,j)\in\mathcal{N}_i(k)$ otherwise $a_{ij}(k)=0$ for all $i,j.$
\item[(c)] $\sum_{i=1}^n a_{ij}(k)=\sum_{j=1}^n a_{ij}(k)=1,$ for all $i,j$.
\end{itemize}
\end{assump}
In the sequel we denote by $\sigma_2(\Abf(k))$ the second largest singular value of $\Abf(k)$. Furthermore, let $\delta$ be a parameter representing the spectral properties of the graph defined as
\begin{align}
\delta \leq \min\,\left\{\,\left(1-\frac{1}{4n^3}\right)^{1/B}\,,\,\max_{k\geq 0}\sigma_2(\Abf(k))\,\right\}.\label{notation:delta}
\end{align}

\subsection{Convergence Analysis}\label{subsec_DLM:Analysis}
We are now ready to present our main results in this section on the convergence of Algorithm \ref{alg:DLM}. We denote by $\Lcal_i:\Rset\times\Rset\rightarrow\Rset$ the local Lagrangian function at node $i$ defined as
\begin{align}
\Lcal_i(x_i,v_i) = f_i(x_i) + v_i(x_i-b_i). 
\label{appendix:iLagrangian}
\end{align}
Our first result considers a particular sequence of stepsize $\{\alpha(k)\}$ that guarantees that the sequence $\{\lambda_i(k)\}$ for each node $i$ converges to a dual solution of ${\sf DP}$. This result is formally stated in the following theorem. 

\begin{thm}\label{alg_DLM:thmAsymConv}
Let Assumptions \ref{assump:Slater} --  \ref{assump:DoubStocMatrix} hold. Let the sequences $\{x_i(k)\}$ and $\{\lambda_i(k)\}$, for all $ i \in\Vcal$,  be generated by Algorithm \ref{alg:DLM}. Assume that the stepsize $\alpha(k)$ is non-increasing, with $\alpha(0) = 1$, and satisfies the following conditions:
\begin{align}
\sum_{k=1}^\infty \alpha(k) = \infty,\quad \sum_{k=1}^\infty \alpha^2(k) < \infty.\label{alg_DLM:thmStepsize}
\end{align}
Then the sequences $\{x_i(k)\}$ and $\{\lambda_i(k)\}$, for all $i\in\Vcal$, satisfy
\begin{enumerate}
\item[(a)] $\underset{k\rightarrow\infty}{\lim}\lambda_i(k) = \lambda^*$ is an optimizer of ${\sf DP}$. 
\item[(b)] $\underset{k\rightarrow\infty}{\lim}\sum_{i=1}^n \Lcal_i(x_i(k),\lambda_i(k))$ is the optimal value of ${\Psf}$.
\end{enumerate}
\end{thm}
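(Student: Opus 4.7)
My plan is to recognize Algorithm \ref{alg:DLM} as the consensus-based distributed subgradient method of \cite{Nedic2009} in disguise, applied to the dual \eqref{alg:mindual}, and then invoke that framework. The key identification is that the primal minimization step \eqref{DLM:stepMinimize} produces exactly a subgradient of $q_i$: by \eqref{alg:subg_qi}, $b_i-x_i(k+1)\in\partial q_i(v_i(k+1))$, so \eqref{DLM:stepSubgradient} reads $\lambda_i(k+1)=v_i(k+1)-\alpha(k)\,g_i(v_i(k+1))$, which is precisely the standard distributed subgradient recursion following the consensus averaging \eqref{DLM:stepConsensus}.

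For part (a), I would first verify the standard prerequisites. Each $q_i$ is convex because $\tilde f_i^*$ is convex; the subgradient $g_i(\lambda)=b_i-x_i(\lambda)$ is uniformly bounded since $\mathcal{X}_i$ is compact, giving $|b_i-x_i|\le |b_i|+\max_{x\in\mathcal{X}_i}|x|=:C_i$. The network conditions (Assumptions \ref{assump:BConnectivity}-\ref{assump:DoubStocMatrix}) together with the Robbins-Monro stepsize \eqref{alg_DLM:thmStepsize} put us exactly in the setting of \cite{Nedic2009}. The resulting analysis yields two conclusions. First, a disagreement (consensus) bound controlled by the spectral parameter $\delta$ in \eqref{notation:delta} drives $|\lambda_i(k)-\bar\lambda(k)|\to 0$, where $\bar\lambda(k)=\frac{1}{n}\sum_j\lambda_j(k)$. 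Second, $\bar\lambda(k)$ behaves as an inexact subgradient iterate on $q=\sum_i q_i$ and converges to some $\lambda^*$ in the dual optimum set, which is nonempty by Slater's condition (Assumption \ref{assump:Slater}, which also yields strong duality). Combining the two gives $\lambda_i(k)\to\lambda^*$ for every $i$, and the doubly stochastic property Assumption \ref{assump:DoubStocMatrix}(c) additionally forces $v_i(k+1)=\sum_j a_{ij}(k)\lambda_j(k)\to\lambda^*$ as well.

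For part (b), I would exploit Fenchel duality to connect $\mathcal{L}_i$ back to $q_i$. Directly from the derivation in \eqref{alg:dual},
\begin{align*}
\min_{x\in\mathcal{X}_i}\mathcal{L}_i(x,\mu) \;=\; -\tilde f_i^*(-\mu)-\mu b_i \;=\; -q_i(\mu),
\end{align*}
so in particular $\mathcal{L}_i(x_i(k),v_i(k))=-q_i(v_i(k))$. Since the statement evaluates $\mathcal{L}_i$ at $\lambda_i(k)$ rather than $v_i(k)$, I would decompose
\begin{align*}
\mathcal{L}_i(x_i(k),\lambda_i(k))=\mathcal{L}_i(x_i(k),v_i(k))+\bigl(\lambda_i(k)-v_i(k)\bigr)\bigl(x_i(k)-b_i\bigr),
\end{align*}
and use $\lambda_i(k)-v_i(k)=\alpha(k-1)(x_i(k)-b_i)$ together with $|x_i(k)-b_i|\le C_i$ to conclude the correction term is $\mathcal{O}(\alpha(k-1))$, which vanishes. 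Since each $q_i$ is finite-valued convex on $\Rset$ hence continuous, and $v_i(k)\to\lambda^*$ for every $i$, summing yields $\sum_i\mathcal{L}_i(x_i(k),v_i(k))=-\sum_i q_i(v_i(k))\to -q(\lambda^*)$, which equals the optimal value of \Psf by strong duality.

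The main obstacle is not the Fenchel bookkeeping in (b) but ensuring that the cited distributed subgradient machinery applies cleanly in the time-varying regime encoded by $\delta$ in \eqref{notation:delta}. One must track the cumulative transition product $\Phi(k,s)=\Abf(k)\cdots\Abf(s)$ and show its rows geometrically approach $\frac{1}{n}\mathbf{1}^\top$ at rate controlled by $\delta$, using $B$-connectivity (Assumption \ref{assump:BConnectivity}) to obtain the decay from products over blocks of length $B$. This geometric decay combined with square-summability of $\alpha(k)$ controls the disagreement term, while non-summability drives the averaged iterate to the dual optimum. A Fej\'er-type argument on $\|\bar\lambda(k)-\lambda^*\|^2$ is the standard route to promote subsequential convergence into full convergence of $\bar\lambda(k)$ to a single $\lambda^*$.
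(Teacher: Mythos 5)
Your proposal is correct and follows the same overall strategy as the paper: part (a) is obtained by recognizing \eqref{DLM:stepConsensus}--\eqref{DLM:stepSubgradient} as the consensus-based subgradient iteration on the dual \eqref{alg:mindual}, verifying the uniform subgradient bound $|b_i - x_i|\leq C_i$ from compactness of $\Xcal_i$ (the paper's Lemma \ref{proposition:subgbound}), and invoking the Nedi\'c--Ozdaglar machinery (the paper cites \cite[Proposition 4]{Nedic2010} for exactly this). For part (b) your route differs in organization but not in substance. The paper works through the primal optimizer: it sandwiches $\sum_{i}\big[\Lcal_i(x_i^*,v_i(k))-\Lcal_i(x_i(k),v_i(k))\big]$ between $0$ (optimality of $x_i(k)$ in \eqref{DLM:stepMinimize}) and $C\|\vbf(k)-\lambda^*\1\|+\sum_i v_i(k)(x_i^*-b_i)$, using strong duality in the form $\sum_i f_i(x_i^*)=-\sum_i q_i(\lambda^*)$ and the Lipschitz bound on $q_i$, then lets both upper-bound terms vanish (the second because $\sum_i(x_i^*-b_i)=0$ and all $v_i(k)$ share the limit $\lambda^*$). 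You instead bypass $\xbf^*$ entirely via the identity $\Lcal_i(x_i(k),v_i(k))=-q_i(v_i(k))$ and pass to the limit by continuity of the finite convex function $q_i$, invoking strong duality only at the end through $-q(\lambda^*)=f^*$. The two arguments rest on the same Fenchel identity and the same Lipschitz/continuity fact; yours is slightly more direct, and it has the additional merit of explicitly handling the discrepancy between $\Lcal_i(x_i(k),\lambda_i(k))$ and $\Lcal_i(x_i(k),v_i(k))$ via $\lambda_i(k)-v_i(k)=\alpha(k-1)(x_i(k)-b_i)=\mathcal{O}(\alpha(k-1))$, a step the paper leaves implicit (it silently replaces $v_i(k)$ by $\lambda^*$ and then by $\lambda_i(k)$ in the limit). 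No gap.
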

A specific choice for the stepsize sequence is $\alpha(k) = 1/(k+1)$ for $k\geq 0$, which obviously satisfies \eqref{alg_DLM:thmStepsize}. Part (a) of Theorem \ref{alg_DLM:thmAsymConv} is a consequence of \cite[Proposition 4]{Nedic2010} while part (b) can be derived using the strong duality of ${\Psf}$. We present the proof of part (b) in the supplementary document. 

A key step in showing part $(a)$ requires that the subgradients of the dual function $q_i$ remain bounded for all $k\geq0$. Given the compactness of $\Xcal_i$ and Eq.\ \eqref{alg:subg_qi}, this boundedness condition is satisfied; formally stated as follows.

\begin{lem}\label{proposition:subgbound}
Let the sequences $\{x_i(k)\}$ and $\{\lambda_i(k)\}$, for all $i\in\Vcal$, be generated by Algorithm \ref{alg:DLM}. Then the subgradient $g_i(v_i(k))$ of $q_i(v_i(k))$ is bounded by some constant $C_i >0$
\begin{align}
|\,g_i(v_i(k))\,|\leq C_i,\quad \text{ for all }  i\in\Vcal.\label{alg_DLM:boundSG}
\end{align}
\end{lem}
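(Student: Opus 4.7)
The plan is to observe that this lemma is essentially a direct consequence of the compactness of $\Xcal_i$ together with the explicit formula \eqref{alg:subg_qi} for the subgradient of $q_i$.

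First I would identify the relevant subgradient. From the construction of Algorithm \ref{alg:DLM}, the iterate $x_i(k+1)$ is defined in \eqref{DLM:stepMinimize} as a minimizer of $f_i(x_i) + v_i(k+1)(x_i - b_i)$ over $\Xcal_i$, which is precisely the minimization problem defining $\tilde{x}_i$ in the expression for the subgradient of $q_i$. Consequently, by \eqref{alg:subg_qi}, a valid subgradient of $q_i$ at $v_i(k+1)$ is
\begin{align*}
g_i(v_i(k+1)) = b_i - x_i(k+1).
\end{align*}

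Next, I would invoke compactness. Since $\Xcal_i \subset \Rset$ is compact by assumption, there exists a finite constant $M_i := \max_{x \in \Xcal_i} |x|$. Because $x_i(k+1) \in \Xcal_i$ for every $k \geq 0$ (the argmin in \eqref{DLM:stepMinimize} is taken over $\Xcal_i$), the triangle inequality yields
\begin{align*}
|g_i(v_i(k+1))| \;=\; |b_i - x_i(k+1)| \;\leq\; |b_i| + M_i.
\end{align*}
Setting $C_i := |b_i| + M_i$ gives the desired uniform bound, independent of $k$.

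There is no real obstacle here: the only thing one must be careful about is consistency of indexing (the subgradient at $v_i(k)$ versus $v_i(k+1)$) and making sure to invoke the fact that the algorithm's primal iterate lies in $\Xcal_i$ at every iteration, which follows immediately from the constrained minimization in \eqref{DLM:stepMinimize}. The result then follows in a couple of lines.
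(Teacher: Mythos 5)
Your proposal is correct and follows essentially the same route as the paper, which justifies the lemma precisely by combining the explicit subgradient formula \eqref{alg:subg_qi} (so that $g_i(v_i(k+1)) = b_i - x_i(k+1)$ with $x_i(k+1)$ the minimizer from \eqref{DLM:stepMinimize}) with the compactness of $\Xcal_i$. Your explicit constant $C_i = |b_i| + \max_{x\in\Xcal_i}|x|$ is exactly the bound the paper implicitly relies on.
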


%\begin{proof}
%Recall from \eqref{alg_DLM:isubgradient} that $\partial q_i(v_i(k)) = -x_i(k) + b_i$. Since $x_i(k)\in\mathcal{X}_i$ $\forall k\geq 0$ and $\mathcal{X}_i$ is a compact set, there exists a positive constant $C_i$ such that $|\partial q_i(v_i(k))|\leq C_i$ $\forall k\geq 0$. 
%\end{proof}
We note that while the local copies of the dual variable $\lambda_i(k)$ tend to a dual optimizer $\lambda^*$ of the dual problem ${\sf DP}$, Theorem \ref{alg_DLM:thmAsymConv} does not automatically imply that
\begin{equation*}
\xbf(k) := \left(x_1^T(k),\ldots,x_n^T(k)\right)^T
\end{equation*} 
converges to a solution of ${\Psf}$. Such a convergence is guaranteed, however, when the functions $f_i$ are strongly convex. We remark that convergence to the optimal value without requiring explicit convergence of the optimizer is reminiscent of the behavior of centralized subgradient descent algorithms.

Our next result is to estimate how fast Algorithm \ref{alg:DLM} converges given some suitable choice of stepsizes. We exploit known techniques for the analysis of centralized subgradient methods to answer this question. In particular, if every node $i$ maintains a variable to track a time-weighted average of its dual variable, the distributed Lagrangian method converges at a rate $\mathcal{O}(nC\ln(k)/\delta\sqrt{k})$ when the stepsize decays as\footnote{Note that the choice of $\alpha(k) = 1/\sqrt{k+1}$ does not satisfy \eqref{alg_DLM:thmStepsize}. Hence, we only establish the rate of convergence to the optimal value.} $\alpha(k) = 1/\sqrt{k+1}$ and $C=\sum_{i=1}^n C_i$. We will use $q^*$ to denote the value of $q$ in \eqref{alg:mindual} evaluated at an optimal solution of the ${\sf DP}$. The following Corollary, is a consequence of \cite[Proposition 3]{Nedic2009}. We skip its proof and refer readers to \cite{Nedic2009, Nedic2010, Nedic2015} for a complete analysis.

\begin{cor}[\!\!\!\cite{Nedic2009}]\label{alg_DLM:convrate}
Let Assumptions \ref{assump:Slater} --  \ref{assump:DoubStocMatrix} hold. Let the sequences $\{x_i(k)\}$ and $\{\lambda_i(k)\}$, for all $i\in\Vcal$, be generated by Algorithm \ref{alg:DLM}. Let $\alpha(k) = 1/\sqrt{k+1}$ for $k\geq 0$. Moreover, suppose that every node $i$ stores a variable $y_i(k)\in\mathbb{R}$ arbitrarily initiated and updated by
\begin{align}
y_i(k+1) = \frac{\alpha(k)\lambda_i(k) + S(k)y_i(k)}{S(k+1)},\quad \text{ for } k\geq 0,\label{alg_DLM:yiUpdate}
\end{align}
where $S(0) = 0$ and $S(k+1) = \sum_{t=0}^{k}\alpha(t)$ for $k\geq 1$. Then, given an optimal $\lambda^*$ of {\sf DP}, we have for all $i\in \Vcal$ and  $k\geq 0$
\begin{align}
q(y_i(k)) - q^*&\leq \frac{n\Big(\bar{\lambda}(0)-\lambda^*\Big)^2}{2\sqrt{k+1}}+\frac{3C\|\bld(0)\|}{(1-\delta)\sqrt{k+1}}\notag\\
&\qquad +\frac{7C^2(1+\ln(k+1))}{2(1-\delta)\sqrt{k+1}} \cdot\label{alg_DLM:functionrate}
\end{align}
\end{cor}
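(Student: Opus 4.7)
The plan is to recognize that Algorithm 1 is exactly a distributed projected subgradient method applied to the unconstrained dual minimization problem in \eqref{alg:mindual}, so that the convergence-rate statement is the direct specialization of the general bound in \cite[Proposition 3]{Nedic2009} to the setting here. First I would rewrite \eqref{DLM:stepSubgradient} as
\begin{align*}
\lambda_i(k+1) = v_i(k+1) - \alpha(k)\,g_i(v_i(k+1)),
\end{align*}
using the identification $g_i(v_i(k+1)) = b_i - x_i(k+1)\in\partial q_i(v_i(k+1))$ from \eqref{alg:subg_qi}, together with the fact that $x_i(k+1)$ defined by \eqref{DLM:stepMinimize} is precisely the minimizer that yields a subgradient of $q_i$ at $v_i(k+1)$. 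This casts the algorithm as the distributed subgradient method of \cite{Nedic2009} in which the local cost minimized at node $i$ is $q_i$, and Lemma \ref{proposition:subgbound} supplies the uniform subgradient bound $\sum_{i=1}^n |g_i(v_i(k))| \leq C$ needed for that analysis.

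Next I would exploit convexity of $q$ together with the running-average update \eqref{alg_DLM:yiUpdate}. A quick induction shows $y_i(k) = \frac{1}{S(k)} \sum_{t=0}^{k-1} \alpha(t)\,\lambda_i(t)$, so by Jensen's inequality
\begin{align*}
q(y_i(k)) - q^* \;\leq\; \frac{1}{S(k)}\sum_{t=0}^{k-1}\alpha(t)\bigl[q(\lambda_i(t)) - q^*\bigr].
\end{align*}
The task then reduces to bounding the weighted sum on the right.

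To do that, I would follow the standard two-track argument. Let $\bar\lambda(k) = \tfrac{1}{n}\sum_i \lambda_i(k)$. Since $\Abf(k)$ is doubly stochastic by Assumption \ref{assump:DoubStocMatrix}, averaging \eqref{DLM:stepSubgradient} gives the centralized-like recursion $\bar\lambda(k+1) = \bar\lambda(k) - \tfrac{\alpha(k)}{n}\sum_i g_i(v_i(k+1))$. Expanding $(\bar\lambda(k+1)-\lambda^*)^2$, using convexity of each $q_i$ at $\bar\lambda(k)$, and splitting $q_i(\bar\lambda(k)) - q_i(\lambda_i(t)) \leq C_i|\bar\lambda(k)-\lambda_i(t)|$ produces a one-step inequality of the form
\begin{align*}
(\bar\lambda(k+1)-\lambda^*)^2
&\leq (\bar\lambda(k)-\lambda^*)^2 - \tfrac{2\alpha(k)}{n}\bigl[q(\bar\lambda(k))-q^*\bigr] \\
&\quad + \tfrac{2\alpha(k)C}{n}\sum_i|\lambda_i(k)-\bar\lambda(k)| + \tfrac{\alpha(k)^2 C^2}{n^2}.
\end{align*}
Summing, telescoping, and applying Jensen once more relates $\sum_t \alpha(t)[q(\lambda_i(t))-q^*]$ to $n(\bar\lambda(0)-\lambda^*)^2$, to $\sum_t \alpha(t)^2$, and to the accumulated network disagreement $\sum_t \alpha(t)\sum_i|\lambda_i(t)-\bar\lambda(t)|$.

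The main obstacle, and the step that really uses the graph assumption, is bounding this disagreement. The plan is to unroll the consensus dynamics $v_i(k+1) = \sum_j a_{ij}(k)\lambda_j(k)$, so that $\lambda_i(k)-\bar\lambda(k)$ is a linear function of the history $(\lambda_j(0))_j$ and of the past subgradient increments $\alpha(t)g_j(v_j(t+1))$, with coefficients drawn from products of $\Abf(k)$. Assumptions \ref{assump:BConnectivity} and \ref{assump:DoubStocMatrix} together with the definition of $\delta$ in \eqref{notation:delta} yield the geometric decay
\begin{align*}
\|\Abf(k)\Abf(k-1)\cdots\Abf(s) - \tfrac{1}{n}\bone\bone^T\| \leq \text{const}\cdot \delta^{k-s},
\end{align*}
and from this one gets the familiar distributed-subgradient disagreement bound $\sum_i |\lambda_i(k)-\bar\lambda(k)| \leq \text{const}\cdot\bigl(\delta^k\|\bld(0)\| + \tfrac{C}{1-\delta}\sum_{t<k}\delta^{k-t}\alpha(t)\bigr)$. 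Combining this with the telescoped inequality above and with the estimates $S(k) = \sum_{t=0}^{k-1}(t+1)^{-1/2} \geq \sqrt{k+1}$ and $\sum_{t=0}^{k-1}(t+1)^{-1} \leq 1+\ln(k+1)$ under the choice $\alpha(k) = 1/\sqrt{k+1}$ gives \eqref{alg_DLM:functionrate} after dividing by $S(k)$. The only delicate bookkeeping is to keep track of the explicit constants $3$ and $7/2$ in the two $1/(1-\delta)$ terms, which is exactly the content of \cite[Proposition 3]{Nedic2009}, so I would reference that result rather than re-deriving those constants by hand.
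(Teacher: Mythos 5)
Your proposal is correct and takes essentially the same route as the paper: the paper explicitly skips the proof of this corollary, stating that it is a direct consequence of \cite[Proposition 3]{Nedic2009} once Algorithm \ref{alg:DLM} is identified as the distributed subgradient method applied to the dual problem \eqref{alg:mindual} with the subgradient bound of Lemma \ref{proposition:subgbound}, which is exactly your reduction. The two-track argument you sketch (averaged recursion for $\bar\lambda$, geometric disagreement bound, Jensen on the $\alpha$-weighted average, and the integral estimates under $\alpha(k)=1/\sqrt{k+1}$) is precisely the argument the paper does carry out in the appendix for the stochastic counterpart, Theorem \ref{alg_DSLM:SDconvrate}.
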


Corollary \ref{alg_DLM:convrate} reveals that $q$ evaluated at time-averaged $\alpha$-weighted local copies of the dual variables (for each node) converges to the optimal value $q^*$. Moreover, it shows that the difference of $q$ at this `average' $\lambda$ from $q^*$ scales as $\mathcal{O} \left( {\ln(k)/\sqrt{k}} \right)$.
The $1/\sqrt{k}$ term mirrors the convergence results for centralized subgradient descent algorithms. For example, see \cite[Chapter 3]{Nesterov04}; essentially, the distributed nature of the algorithm slows the convergence by a factor of $\ln(k)$. Further, the convergence rate depends inversely on $1-\delta$, the \textit{spectral gap} of $\Abf(k)$ for $k\geq0$. In a sense, $\delta$ indicates how fast the information among the nodes is diffused across the graph. Finally, \cite{Nedic2015} shows that the spectral gap scales inversely with $n^3$ where $n$ is the number of agents. Thus, the larger the number of agents, the smaller the spectral gap, and hence, the slower the convergence.

%!TEX root = allocation_TAC.tex

\section{Distributed Stochastic Lagrangian Methods}\label{sec:DSLM}
We now study ${\Psf}$ under uncertainty, where the portion of the resources is unknown. We are motivated by the fact that in real applications resource allotments are often changing over time, typically randomly, and their data may be uncertain. As an example, in power systems,  power loads - especially residential loads - fluctuate randomly due to the variation in energy consumption by consumers \cite{Zhu2008}. These random fluctuations also may result from the generation side, due to the variability in output from renewable resources. Thus, any solution to resource allocation problems should demonstrate some robustness to uncertainty.  Our goal in this section is to design a distributed stochastic Lagrangian method, and demonstrate that this method is robust to resource uncertainty. Motivated by the analysis in Section \ref{subsec_DLM:Analysis}, we also provide an upper bound for the rate of convergence of this method in expectation on the topology of the underlying networks.

\subsection{Main Algorithm}\label{subsec_DSLM:alg}
We assume the exact allotment of the resource is unknown and we can only estimate it from noisy data. For example, power generation levels in power systems at any time are predicted from hourly day-ahead energy consumption data, which may not be accurate. Therefore, we assume that at any time $k\geq0$ each node $i$ is able to access only a noisy measurement of $b_i$, i.e., node $i$ can sample $\ell_i(k)$ given as
\begin{align}
\ell_i(k) = b_i + \eta_i(k),\quad k\geq0,\label{alg_DSLM:loadmeasurement}
\end{align}
and the random variables $\eta_i$ represent random fluctuations in the allocations of the resources at the nodes; the sum of constants $b_i$ represents the expected resource shared by the nodes. We note that we do not assume the constants $b_i$ are known by the nodes, thus our model is general enough to cover the case of time-varying resources. We do, however, assume that the random variables $\eta_i$ satisfy the following assumption.

\begin{assump}\label{assump:boundednoise}
The random variables $\eta_i$ are independent with zero mean, i.e., $\mathbb{E}[\eta_i] = 0$, for all $i \in\Vcal$. Moreover, we assume that these random variables are almost surely bounded, i.e., for every $i$, there is a scalar $c_i > 0 $ such that $|\eta_i|\leq c_i$ almost surely for all $i\in\Vcal$.
\end{assump}

This assumption implies that we only allow finite, but possibly arbitrarily large, perturbations limited by the constants $c_i$, in the nodes' measurements. This condition is reasonable, for example, in actual power systems the hourly day-ahead data is often approximately accurate with respect to the current consumption. Moreover, small fluctuations in loads are often seen in practice since large fluctuations may lead to a blackout condition. We note that the assumption of zero mean implies that while being robust to the noisy measurements of the resources, the goal is to meet the expected number of loads defined by $\sum_i b_i$. Finally, here we do not make any assumption on the distribution of $\eta_i$.

We now proceed to present our distributed stochastic Lagrangian method for solving ${\Psf}$ under uncertainty in the constraint \eqref{prob:couple_const}, i.e., when the number of resources is unknown. Recall from Section \ref{subsec_DLM:alg} that in the distributed Lagrangian method we utilize the distributed subgradient algorithm to solve the dual ${\sf DP}$ of ${\Psf}$. Here, due to the uncertainty the nodes $i$ have to use a noisy measurement of $b_i$ represented by $\ell_i(k)$ to update their dual variables $\lambda_i(k)$, resulting in a distributed stochastic subgradient method for ${\sf DP}$. The proposed distributed stochastic Lagrangian algorithm is formally presented in Algorithm \ref{alg:DSLM}. 

Algorithm \ref{alg:DSLM} shares similar mechanics to Algorithm \ref{alg:DLM} studied in Section \ref{subsec_DLM:alg}. A notable difference is in Eq.\ \eqref{DSLM:stepSubgradient} of Algorithm \ref{alg:DSLM} where $\lambda_i$ is updated by moving a distance of $\alpha(k)$ along a noisy subgradient of $q_i$ at $v_i(k+1)$, given by
\begin{equation}
g_i^{s}(v_i(k+1)) \triangleq \ell_i(k) - x_i(k+1).\label{alg_DSLM:isubgradient}
\end{equation}  
We note that the variables $v_i$, $x_i,$ and $\lambda_i$ are now random variables because of the noisy measurements $\ell_i$.

\begin{algorithm}
	\caption{Distributed Stochastic Lagrangian Method  ({\sf DSLM}) for Solving {\Psf} under Uncertainty.}\label{alg:DSLM}
		1. \textbf{Initialize}: Each node $i$ initializes $\lambda_i(0) \in \Rset$\\
2. \textbf{Iteration}: For $k\geq 0$, each node $i$ executes 
\begin{align}
v_i(k+1) &= \sum_{j=1}^n a_{ij}(k) \lambda_j (k) \label{DSLM:stepConsensus}\\
			 x_i(k+1) &\in \arg\underset{x_i\in \Xcal_i}{\min}  f_i(x_i) + v_i(k+1) (x_i - \ell_i(k)) \label{DSLM:stepMinimize}\\
			\lambda_i(k+1) &=  v_i(k+1)  + \alpha(k) \left(x_i (k+1)  -  \ell_i(k) \right) \label{DSLM:stepSubgradient}
\end{align}
\end{algorithm}

\subsection{Convergence analysis}\label{subsecDSLM:ConvAnal}
We now present our main results on the convergence of Algorithm \ref{alg:DSLM}. For the sake of clarity, we again present the full analysis of these results in the supplementary document. Our first result considers a particular stepsize sequence $\{\alpha(k)\}$ that guarantees the sequence of the local copy of the dual variable $\{\lambda_i(k)\}$ for each node $i$ converges to a dual solution of ${\sf DP}$ almost surely ($a.s.$). This result, which can be viewed as a stochastic version of Theorem \ref{alg_DLM:thmAsymConv}, is formally stated in the following theorem. 

\begin{thm}\label{alg_DSLM:thmAsymConv}
Let Assumptions \ref{assump:Slater} -- \ref{assump:boundednoise} hold. Let the sequences $\{x_i(k)\}$ and $\{\lambda_i(k)\}$, for all $i\in\Vcal$, be generated by Algorithm \ref{alg:DSLM}. Assume that the step size $\alpha(k)$ is non-increasing, $\alpha(0) = 1$, and satisfies the following conditions,
\begin{align}
\sum_{k=1}^\infty \alpha(k) = \infty,\quad \sum_{k=1}^\infty \alpha^2(k) < \infty.\label{alg_DSLM:thmStepsize}
\end{align}
Then the sequences $\{x_i(k)\}$ and $\{\lambda_i(k)\}$, for all $i\in\Vcal$, satisfy
\begin{enumerate}[leftmargin = 5.5mm]
\item[(a)] $\underset{k\rightarrow\infty}{\lim}\lambda_i(k) = \lambda^*$ $a.s.$, where $\lambda^*$ is an optimizer of ${\sf DP}$. 
\item[(b)] $\underset{k\rightarrow\infty}{\lim}\Eset\Big[\sum_{i=1}^n \Lcal_i(x_i(k),\lambda_i(k))\Big]$ is the optimal value of ${\Psf}$.
\end{enumerate}
\end{thm}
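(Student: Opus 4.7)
The plan is to treat Algorithm \ref{alg:DSLM} as a stochastic perturbation of Algorithm \ref{alg:DLM}, so that the machinery behind Theorem \ref{alg_DLM:thmAsymConv}---distributed consensus estimates paired with subgradient descent on the dual---can be combined with a Robbins--Siegmund supermartingale argument. The pivotal observation is that the noisy subgradient in \eqref{alg_DSLM:isubgradient} decomposes as
\begin{align*}
g_i^s\bigl(v_i(k+1)\bigr) = \bigl(b_i - x_i(k+1)\bigr) + \eta_i(k) = g_i\bigl(v_i(k+1)\bigr) + \eta_i(k),
\end{align*}
where $g_i$ is the true subgradient from \eqref{alg:subg_qi} and $\eta_i(k)$ is zero-mean, almost surely bounded, and independent across $i$ and $k$ under Assumption \ref{assump:boundednoise}. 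Because $\ell_i(k)$ enters \eqref{DSLM:stepMinimize} only as an $x$-independent additive constant, $x_i(k+1)$ coincides with $\arg\min_{x \in \Xcal_i}\Lcal_i(x, v_i(k+1))$ and is $\Fcal_k$-measurable for the natural filtration $\Fcal_k = \sigma(\eta_i(t): i\in\Vcal,\ t < k)$. Hence $\Eset[g_i^s \mid \Fcal_k] = g_i(v_i(k+1))$, and compactness of $\Xcal_i$ together with $|\eta_i|\le c_i$ delivers a stochastic counterpart of Lemma \ref{proposition:subgbound}: $|g_i^s|\le C_i + c_i$ almost surely.

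For part (a), I would track the running average $\bar{\lambda}(k) = \tfrac{1}{n}\sum_i \lambda_i(k)$. Double stochasticity of $\Abf(k)$ yields
\begin{align*}
\bar{\lambda}(k+1) = \bar{\lambda}(k) - \frac{\alpha(k)}{n}\sum_{i=1}^n \bigl[g_i(v_i(k+1)) + \eta_i(k)\bigr].
\end{align*}
Squaring, invoking convexity of $q$, and taking conditional expectations given $\Fcal_k$ eliminates the linear-in-$\eta_i(k)$ term and produces a recursion of the form
\begin{align*}
\Eset\bigl[\|\bar{\lambda}(k+1)-\lambda^*\|^2 \,\big|\, \Fcal_k\bigr] \leq \|\bar{\lambda}(k)-\lambda^*\|^2 - \tfrac{2\alpha(k)}{n}\bigl(q(\bar{\lambda}(k))-q^*\bigr) + M\alpha(k)^2 + \gamma(k),
\end{align*}
where $\gamma(k)$ is a nonnegative summand controlled by $\alpha(k)\|\lambda_i(k)-\bar{\lambda}(k)\|$. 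A separate consensus lemma---mirroring the argument behind Corollary \ref{alg_DLM:convrate} and exploiting the spectral-gap bound \eqref{notation:delta} on backward products of $\Abf(k)$---shows $\sum_k \gamma(k) < \infty$ a.s., by combining $\sum_k \alpha(k)^2 < \infty$ with the a.s.\ boundedness of $g_i^s$. Robbins--Siegmund then yields a.s.\ convergence of $\|\bar{\lambda}(k)-\lambda^*\|^2$ and $\sum_k \alpha(k)(q(\bar{\lambda}(k))-q^*) < \infty$; combined with $\sum_k \alpha(k)=\infty$ and the continuity of $q$, this forces every accumulation point of $\bar{\lambda}(k)$ into the dual optimal set, and hence $\bar{\lambda}(k)\to \lambda^*$ a.s. The consensus estimate upgrades this to $\lambda_i(k)\to\lambda^*$ a.s.\ for every $i$.

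For part (b), the envelope identity $\Lcal_i(x_i(k+1), v_i(k+1)) = -q_i(v_i(k+1))$---which follows by evaluating $\Lcal_i$ at its minimizer over $\Xcal_i$---gives $\sum_i \Lcal_i(x_i(k+1), v_i(k+1)) = -\sum_i q_i(v_i(k+1))$. Since $v_i(k+1) = \sum_j a_{ij}(k)\lambda_j(k) \to \lambda^*$ a.s.\ by part (a) and each $q_i$ is continuous, the right-hand side converges a.s.\ to $-q(\lambda^*)$, which equals the primal optimal value $p^*$ by strong duality (Slater's condition). Replacing $v_i$ by $\lambda_i$ costs at most $|\lambda_i(k+1)-v_i(k+1)|\cdot|x_i(k+1)-b_i| = \alpha(k)|g_i^s|\cdot|x_i(k+1)-b_i|$, which vanishes a.s. Finally, the $L^2$-boundedness of the iterates inherited from the Robbins--Siegmund recursion, combined with the almost-sure limit, yields uniform integrability and hence convergence in expectation, establishing part (b). The main obstacle is the coupled consensus-and-noise analysis in part (a): one must show that the cross terms between the consensus error $\|\lambda_i(k)-\bar{\lambda}(k)\|$ and the stochastic perturbation $\eta_i(k)$ remain summable almost surely, so that Robbins--Siegmund applies cleanly and delivers almost-sure (rather than merely mean-square) convergence of each $\lambda_i(k)$ to the \emph{same} dual optimizer $\lambda^*$.
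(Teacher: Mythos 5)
Your proposal is correct and follows essentially the same route as the paper: part (a) via the recursion for $\bar{\lambda}(k)$, the decomposition $g_i^s = g_i + \eta_i$ with $\Eset[g_i^s\,|\,\Fcal_k]=g_i(v_i(k+1))$, the perturbed-consensus summability estimate, and Robbins--Siegmund; part (b) via strong duality and the identity $\Lcal_i(x_i(k),v_i(k))=-q_i(v_i(k))$. The only (immaterial) difference is in part (b), where you pass to the limit through almost-sure convergence plus uniform integrability, whereas the paper bounds the gap directly in conditional expectation using $\Eset[\ell_i(k)]=b_i$ and $\|\bld(k-1)-\lambda^*\mathbf{1}\|\to 0$; both close the argument.
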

Note that part (b) of Theorem \ref{alg_DSLM:thmAsymConv} can be derived using the strong duality of ${\Psf}$ and ${\sf DP}$. On the other hand, part $(a)$ is more involved compared to its deterministic counterpart in Theorem \ref{alg_DLM:thmAsymConv}. The key step to show part $(a)$ again requires that $g_i^s(v_i(k))$ remains bounded almost surely for all $k\geq0$. Fortunately, based on the compactness of $\Xcal_i$ and Assumption \ref{assump:boundednoise}, this condition is satisfied as given in the following lemma.

\begin{lem}\label{alg_DSLM:subgbound}
Let Assumption \ref{assump:boundednoise} hold. Let the sequences $\{x_i(k)\}$ and $\{\lambda_i(k)\}$, for all $i\in\Vcal$, be generated by Algorithm \ref{alg:DSLM}. Then there exists a positive constant $D_i$ such that,
\begin{align}
|\,g_i^s(v_i(k))\,|\leq D_i \;\text{ a.s.},\quad \text{ for all }  i\in\Vcal,\, k\geq0.\label{alg_DSLM:idualSB}
\end{align}
\end{lem}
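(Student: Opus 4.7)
The plan is to reduce the statement to a triangle inequality after bounding the two pieces of $g_i^s$ using the data at hand. Recall from Eq.~\eqref{alg_DSLM:isubgradient} that the stochastic subgradient has the form $g_i^s(v_i(k+1)) = \ell_i(k) - x_i(k+1)$, so the task is to bound $\ell_i(k)$ and $x_i(k+1)$ separately and uniformly in $k$.

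First I would handle $\ell_i(k)$. By the measurement model in Eq.~\eqref{alg_DSLM:loadmeasurement}, $\ell_i(k) = b_i + \eta_i(k)$, and Assumption~\ref{assump:boundednoise} guarantees $|\eta_i(k)| \leq c_i$ almost surely. Hence $|\ell_i(k)| \leq |b_i| + c_i$ a.s., and this bound is independent of $k$.

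Next I would bound $x_i(k+1)$ using feasibility. The primal update in Eq.~\eqref{DSLM:stepMinimize} restricts the minimizer to $\Xcal_i$, which is a compact convex subset of $\Rset$ by assumption. Therefore $M_i := \sup_{x \in \Xcal_i}|x| < \infty$ and $|x_i(k+1)| \leq M_i$ deterministically for every $k \geq 0$. Combining with the previous bound via the triangle inequality gives $|g_i^s(v_i(k))| \leq |b_i| + c_i + M_i$ a.s., so setting $D_i := |b_i| + c_i + M_i$ finishes the argument.

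There is no substantive obstacle here: the lemma is an immediate consequence of the compactness of $\Xcal_i$ and the a.s.\ boundedness of the noise, and its role parallels that of Lemma~\ref{proposition:subgbound} in the deterministic analysis. The only point requiring care is tracking that the bound on $\ell_i(k)$ (and hence on $g_i^s$) holds almost surely rather than deterministically, so that the resulting bound can be fed into the stochastic convergence argument supporting Theorem~\ref{alg_DSLM:thmAsymConv}(a).
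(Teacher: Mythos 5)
Your proof is correct and follows exactly the route the paper intends: the lemma is stated there as an immediate consequence of the compactness of $\Xcal_i$ (bounding $x_i(k+1)$) and Assumption~\ref{assump:boundednoise} (bounding $\ell_i(k)=b_i+\eta_i(k)$ almost surely), combined by the triangle inequality. Your explicit constant $D_i=|b_i|+c_i+M_i$ and your remark that the bound is almost sure rather than deterministic are both consistent with how the bound is used downstream.
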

Finally, we exploit the same technique as that used in Theorem \ref{alg_DLM:convrate} to establish the convergence rate of Algorithm \ref{alg:DSLM}.  We show this algorithm converges at a rate $\mathcal{O}(n\ln(k)/\delta\sqrt{k})$ to the optimal value in expectation. 
 
\begin{thm}\label{alg_DSLM:SDconvrate}
Let Assumptions \ref{assump:Slater}--\ref{assump:boundednoise} hold. Let $\{x_i(k)\}$ and $\{\lambda_i(k)\}$, for all $i\in\Vcal$, be generated by Algorithm \ref{alg:DSLM}. Let $\alpha(k) = 1/\sqrt{k+1}$ for $k\geq 0$. Moreover, suppose every node $i$ stores a variable $y_i\in\mathbb{R}$, arbitrarily initiated and updated by
\begin{align}
y_i(k+1) = \frac{\alpha(k)\lambda_i(k) + S(k)y_i(k)}{S(k+1)},\quad \text{ for } k\geq 0,\label{alg_DSLM:yiUpdate}
\end{align}
where $S(0) = 0$ and $S(k+1) = \sum_{t=0}^{k}\alpha(t)$ for $k\geq 0$. Then, we have for all $i\in\Vcal$
\begin{align}
\Eset\Big[q(y_i(k))\Big]- q^*&\leq \frac{n\Eset\Big[\Big(\bar{\lambda}(0)-\lambda^*\Big)^2\Big]}{2\sqrt{k+1}} + \frac{3D\Eset\Big[\|\bld(0)\|\Big]}{(1-\delta)\sqrt{k+1}} \notag\\
&\qquad +\frac{7D^2(1+\ln(k+1))}{2(1-\delta)\sqrt{k+1}}\cdot\label{alg_DSLM:functionrate}
\end{align}
\end{thm}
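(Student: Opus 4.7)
The plan is to adapt the convergence rate analysis of the deterministic distributed subgradient method from Corollary \ref{alg_DLM:convrate} (originally \cite[Proposition 3]{Nedic2009}) to the stochastic setting of Algorithm \ref{alg:DSLM}. The key observation, which makes this adaptation natural, is that Assumption \ref{assump:boundednoise} allows the noisy subgradient to be decomposed as
\begin{align*}
g_i^{s}(v_i(k+1)) \;=\; \underbrace{\bigl(b_i - x_i(k+1)\bigr)}_{=\,g_i(v_i(k+1))\,\in\,\partial q_i(v_i(k+1))} + \;\eta_i(k),
\end{align*}
where $\Eset[\eta_i(k)\,|\,\Fcal_k]=0$ for $\Fcal_k$ the natural filtration generated by the history up to time $k$. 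Hence $g_i^{s}$ is an unbiased estimator of a genuine subgradient of $q_i$, while Lemma \ref{alg_DSLM:subgbound} supplies the uniform almost-sure bound $|g_i^{s}(v_i(k+1))|\leq D_i$, with $D\triangleq\sum_{i=1}^n D_i$ playing the role of the deterministic cumulative bound $C$ in Corollary \ref{alg_DLM:convrate}.

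The argument then proceeds in three stages. First, I would establish a consensus-error bound for $\lambda_i(k)$ about the network average $\bar{\lambda}(k)\triangleq\tfrac{1}{n}\sum_i\lambda_i(k)$. Unfolding \eqref{DSLM:stepConsensus} into the matrix product $\Abf(k\!:\!0)\bld(0)$ plus accumulated perturbations, and invoking the standard geometric consensus estimate for doubly stochastic $\Abf(k)$ with spectral-gap parameter $\delta$ from \eqref{notation:delta} (Assumption \ref{assump:DoubStocMatrix}), yields an almost-sure bound of the form
\begin{align*}
|\lambda_i(k)-\bar{\lambda}(k)| \;\leq\; C_1\,\delta^{k}\,\|\bld(0)\| \;+\; C_2\, D\,\sum_{t=0}^{k-1}\delta^{k-1-t}\alpha(t).
\end{align*}
Second, I would derive a one-step descent inequality for $\bar{\lambda}(k)$. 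Averaging \eqref{DSLM:stepSubgradient} over $i$, subtracting $\lambda^{*}\in\arg\min q$, squaring and taking conditional expectation, the first-order cross term $\tfrac{2\alpha(k)}{n}(\bar{\lambda}(k)-\lambda^{*})\sum_{i=1}^n\eta_i(k)$ vanishes because $\bar{\lambda}(k)$ is $\Fcal_k$-measurable, producing
\begin{align*}
\Eset\bigl[(\bar{\lambda}(k+1)-\lambda^{*})^{2}\,|\,\Fcal_k\bigr] &\leq (\bar{\lambda}(k)-\lambda^{*})^{2} - \tfrac{2\alpha(k)}{n}\bigl(q(\bar{\lambda}(k))-q^{*}\bigr) \\
&\quad + R(k) + \tfrac{\alpha^{2}(k)D^{2}}{n^{2}},
\end{align*}
where $R(k)$ collects the slack from applying the subgradient inequality at $\bar{\lambda}(k)$ rather than at the local $v_i(k+1)$, and is controlled by $\tfrac{2\alpha(k)}{n}\sum_i D_i\,|\lambda_i(k)-\bar{\lambda}(k)|$ via the consensus bound above.

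The third stage is a standard time-averaging argument. Telescoping the descent inequality from $t=0$ to $k$, rearranging using $S(k+1)=\sum_{t=0}^{k}\alpha(t)\geq\sqrt{k+1}$ for the prescribed stepsize $\alpha(t)=1/\sqrt{t+1}$, and applying Jensen's inequality to the convex function $q$ at the weighted average $\bar{y}(k)\triangleq\tfrac{1}{n}\sum_i y_i(k)$ (which inherits the recursion \eqref{alg_DSLM:yiUpdate}), I obtain an inequality of the desired form for $\Eset[q(\bar{y}(k))]-q^{*}$. A second application of the consensus bound to the $y_i(k)$ sequence promotes this into the per-node bound \eqref{alg_DSLM:functionrate}. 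The factor $\ln(k+1)/\sqrt{k+1}$ then emerges from $\sum_{t=0}^{k}\alpha^{2}(t)=\mathcal{O}(\ln(k+1))$ and the double sum $\sum_{t=0}^{k}\alpha(t)\sum_{s=0}^{t-1}\delta^{t-1-s}\alpha(s)=\mathcal{O}(\ln(k+1)/(1-\delta))$, after division by $S(k+1)$; matching constants in these summations produces the coefficients $3/(1-\delta)$ and $7/[2(1-\delta)]$ in \eqref{alg_DSLM:functionrate}.

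The main obstacle, as I see it, is handling the interaction between the consensus error and the stochastic noise when passing to expectations: although $\Eset[\eta_i(k)\,|\,\Fcal_k]=0$ cleanly eliminates first-order cross terms, the random consensus bound itself depends on past realizations $\eta_i(0),\ldots,\eta_i(k-1)$, so naive in-expectation manipulations would create uncontrolled second-moment couplings. Exploiting the uniform almost-sure bound from Lemma \ref{alg_DSLM:subgbound} rather than an in-expectation surrogate keeps every intermediate inequality deterministic pathwise, so that the stochastic proof essentially mirrors the deterministic one in \cite{Nedic2009} term by term, with $C$ replaced by $D$ and final inequalities wrapped in a single outer expectation.
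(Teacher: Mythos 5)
Your proposal is correct and follows essentially the same route as the paper's proof: the same unbiased-subgradient decomposition with the almost-sure bound $D$ standing in for $C$, the same perturbed-consensus estimate, the same conditional descent inequality for $\bar{\lambda}(k)$ with the vanishing noise cross-term, and the same telescoping/Jensen/Lipschitz-transfer chain to pass from the averaged iterate to the per-node $y_i(k)$. The only difference is presentational (you phrase the final step via $\bar{y}(k)$ rather than the $\alpha$-weighted average of $\bar{\lambda}(k)$, which are identical), so no further comment is needed.
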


%!TEX root = allocation_TAC.tex

\section{Case Studies}\label{sec:Simulation}
In this section, we consider case studies that demonstrate the effectiveness of the two methods proposed in Sections \ref{sec:DLM} and \ref{sec:DSLM}, for solving economic dispatch problems in power systems. We first consider the IEEE-14 bus test system \cite{IEEE14BusWebsite} where we consider two cases, constant loads and uncertain loads.  To test our methods on large scale systems, we then apply our method to the IEEE-118 bus test system \cite{IEEE118BusWebsite}, assuming a constant load. In all cases, we model the communication between nodes by a sequence of time-varying graphs. Specifically, we assume that at any iteration $k\geq 0$, a graph $\Gcal(k) = (\mathcal{V},\mathcal{E}(k))$ is generated randomly such that $\Gcal(k)$ is undirected and  connected. This implies that the connectivity constant $B$ in Assumption \ref{assump:BConnectivity} is equal to $1$. The sequence of adjacency matrices $\{\v{A}(k)\}$ is then set equal to the sequence of lazy Metropolis matrices corresponding to $\Gcal(k)$, i.e., for all $k\geq 0$,
\begin{align}
&\v{A}(k) = [a_{ij}(k)]\nonumber\\
&= \left\{\begin{array}{ll}
\frac{1}{2(\max\{|\mathcal{N}_i(k),\mathcal{N}_j(k)|\})}, &\!\!\! \text{if } (i,j) \in \mathcal{E}(k)\\
0, &\!\!\!\text{if } (i,j)\notin\mathcal{E} \text{ and } i\neq j\\
1-\sum_{j\in\mathcal{N}_i(k)}a_{ij}(k),&\!\!\! \text{if } i = j
\end{array}\right.
\end{align}
Note that since $\Gcal(k)$ represents undirected and connected graphs, it is obvious that $\v{A}(k)$ satisfies Assumption \ref{assump:DoubStocMatrix}. Finally, for all studies the simulations are terminated when the errors are less than $10\%$, i.e., $|\lambda_i(k)-\lambda^*|<0.1\;\lambda^*$ for all $i\in\Vcal$.

We now give an interpretation of problem {\Psf} in the context of economic dispatch problems. Here each node $i$ in the network can be interpreted as a single (or group of) generator(s) in one area. The variables $x_i$ represent the power generated at the generator(s) $i$. Upon generating an amount $x_i$ of power, generator $i$ suffers a cost as a function of its power, i.e., $f_i(x_i)$. The total cost of the network is then represented by the sum of the costs at each generator, i.e., $\sum_{i=1}^n f_i(x_i)$. The total power generated is required to meet the total load of the network described by the constant $P$, i.e., $\sum_{i=1}^n x_i = P$. This constraint implies that we are not concerned about the power balance at any individual generator, but only the overall balance in the network. Moreover, each generator $i$ is assumed to be able to generate only a limited power, i.e.,  $\ell_i\leq x_i\leq u_i$. The goal of this problem is to obtain an optimal load schedule for the generators while satisfying the network constraints. Comparative simulations of our proposed method to the approach in \cite{Kar2012} can be found in \cite{DoanB2016}.

\subsection{Economic dispatch for IEEE 14-bus test systems}

\begin{figure}[t]
\centering
\includegraphics[width=3in, keepaspectratio]{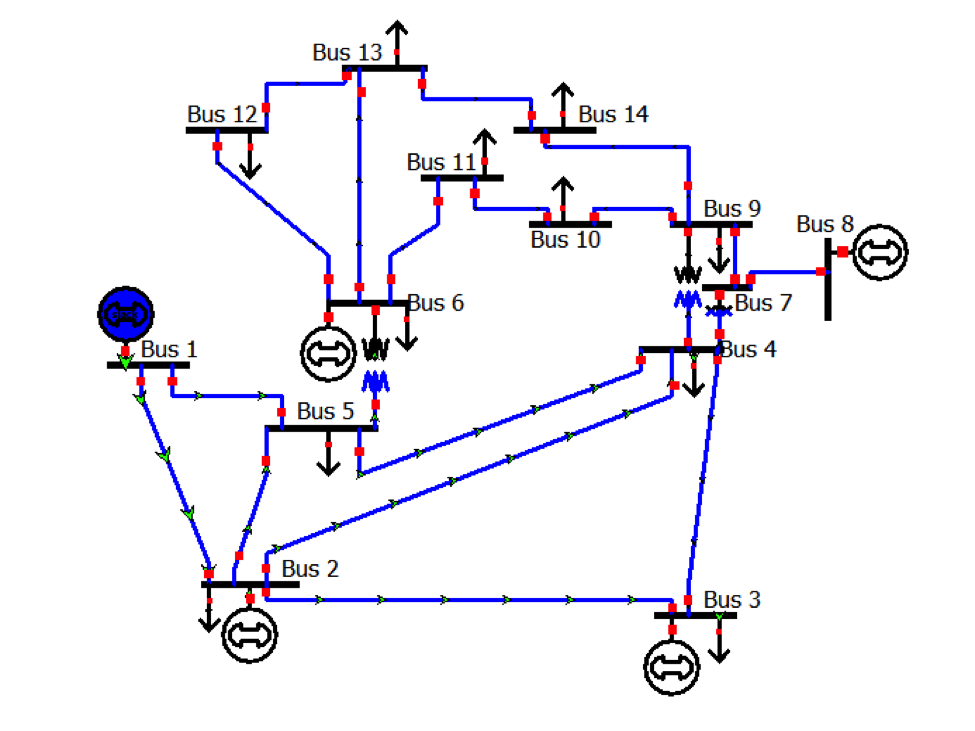}
\caption{IEEE $14$ bus systems.}\label{fig:IEEE14Bus}
\end{figure}

\begin{table}[h!]
\centering
\caption{node parameters (MU= Monetary units)} \begin{tabular}{|c|c|c|c|c|}
\hline
Gen. & Bus & $a_i[MU/MW^2]$ & $b_i[MU/MW]$ & $P_i^{\text{max}}[MW]$\\
\hline
 1 & 1 & 0.04 & 2.0 & 80 \\
 2 & 2 & 0.03 & 3.0 & 90 \\
 3 & 3 & 0.035 & 4.0 & 70 \\
 4 & 6 & 0.03 & 4.0 & 70 \\
 5 & 8 & 0.04 & 2.5 & 80 \\
\hline
\end{tabular}
\label{tab:14BusnodePar}
\end{table}

We now consider economic dispatch problems on the IEEE 14-bus test system \cite{IEEE14BusWebsite}. In this system, generators are located at buses $1,2,3,6,$ and $8$ as shown in Fig. \ref{fig:IEEE14Bus}. Each generator $i$ suffers a quadratic cost as a function of the amount of its generated power $P_i$, i.e., $f_i(P_i) = a_iP_i^2+b_iP_i$ where $a_i,b_i$ are cost coefficients of generators $i$. We also assume that each generator $i$ can only generate a limited amount of power constrained to lie in the interval $[0,P_i^{\text{max}}]$. The coefficients of the generators are listed in Table \ref{tab:14BusnodePar} which are adopted from \cite{Kar2012}. The expected load addressed by the network is assumed to be $P = 300 MW$. The goal now is to meet the load demand while minimizing the total cost of the nodes.

We first consider the case of constant loads, in which we initialize the generator power levels to $P_1 = 40(MW)$, $P_2 =  80(MW)$, $P_3 = 60(MW)$, $P_4 = 80(MW)$, and $P_5 =40(MW)$, giving $\sum_i P_i = P = 300(MW)$. We apply the distributed Lagrangian method of Algorithm \ref{alg:DLM} to solve this dispatch problem; simulations are shown in Fig. \ref{fig:IEEE14Bus_SimulationCL}. The top plot of Fig. \ref{fig:IEEE14Bus_SimulationCL} shows that our method achieves the optimal cost within a dozen iterations.  The consensus on the Lagrange multipliers $\lambda_i$ (i.e., the incremental cost) is shown in the center plot. Both plots verify our results, as presented in Section \ref{sec:DLM}. Finally, the bottom shows that the total generated power of the network, $\sum_i P_i$, meets the load demand $P=300(MW)$.

We then consider the case of uncertain loads, where we assume that at any iteration $k\geq 0$, each node $i$ has access to a noisy measurement of $b_i$, i.e., $b_i+\eta_i(k)$. Here each $\eta_i(k)$ is generated as independent zero-mean random variables. We apply the distributed stochastic Lagrangian method of Algorithm \ref{alg:DSLM} to this case. Simulations demonstrate the convergence of the total expected cost to the optimal cost; see the top plot in Fig. \ref{fig:IEEE14Bus_SimulationUL}.  In the same figure, the almost sure convergence of Lagrange multipliers is illustrated in the center plot, while the bottom plot shows that the total generated power meets the load demand, almost surely. As can be seen from this figure, the plots are influenced by the noise, as compared to the case of constant loads, i.e., the simulation results under load uncertainty are as not as smooth compared to those with constant loads. However, the convergence in expectation of the cost indicates that our method is robust to the load uncertainty.

\begin{figure}
\centering
    \begin{subfigure}[b]{\columnwidth}
        \centering
    \includegraphics[width=\columnwidth]{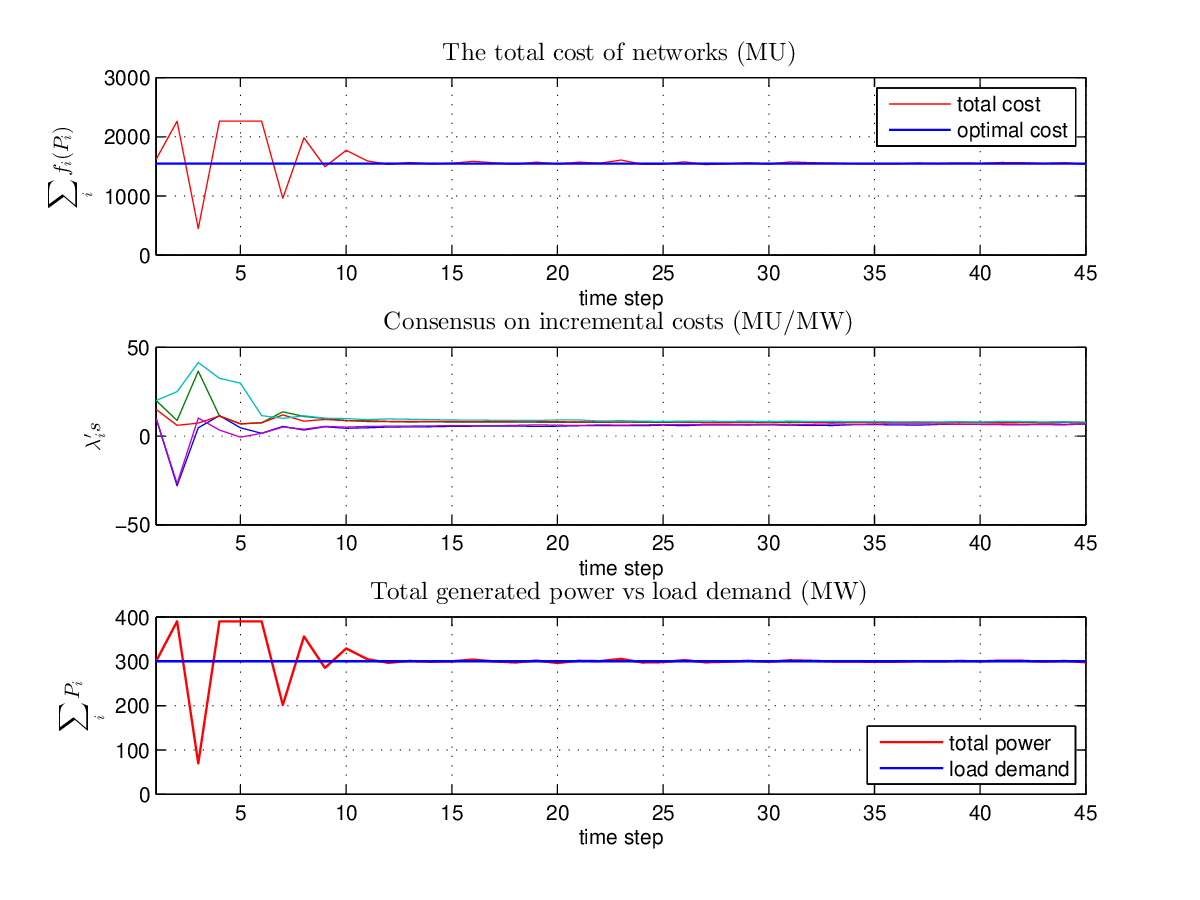}
    \caption{Constant loads.}
    \label{fig:IEEE14Bus_SimulationCL}
    \end{subfigure}
    \begin{subfigure}[b]{\columnwidth}
        \centering
        \includegraphics[width=\columnwidth]{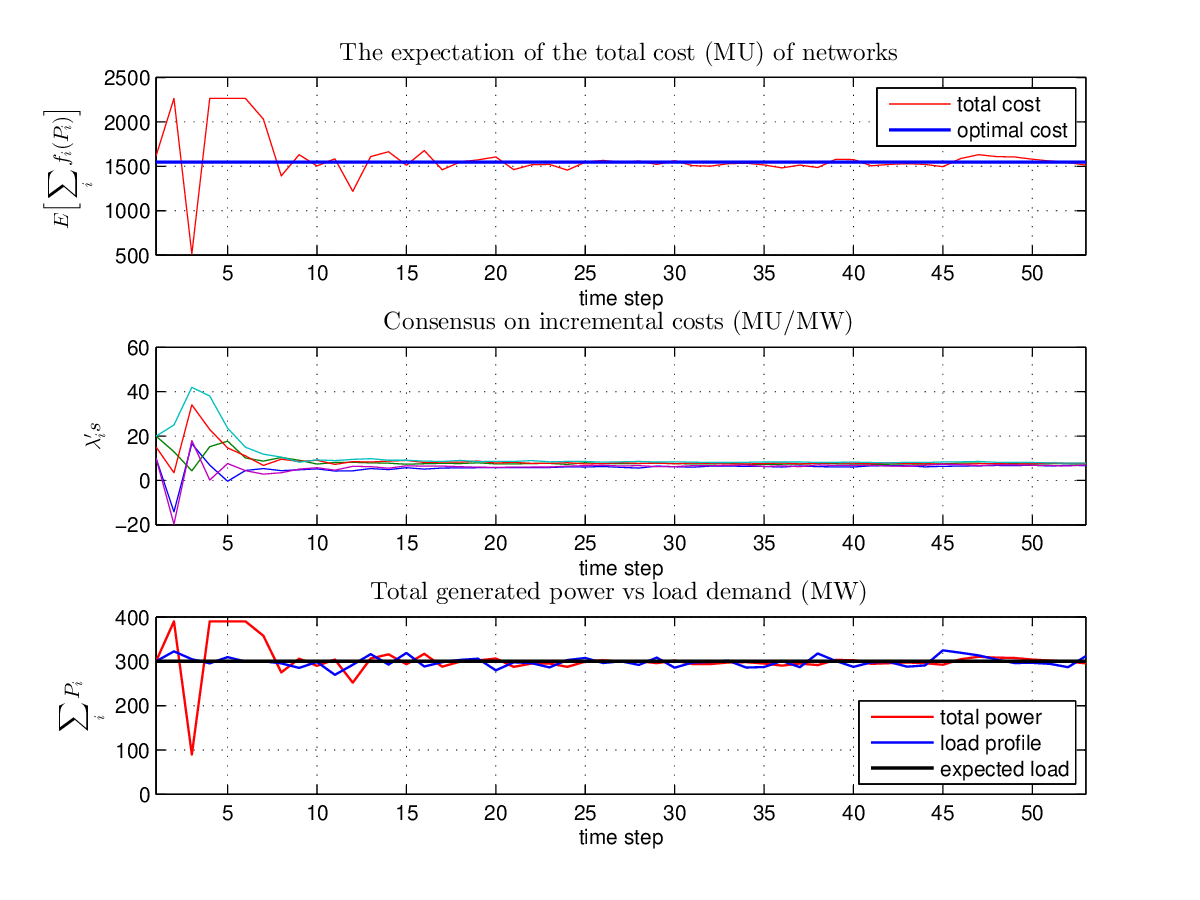}
        \caption{Load uncertainty.}
        \label{fig:IEEE14Bus_SimulationUL}
    \end{subfigure}
\caption{Simulation results for the economic dispatch problem: IEEE 14 bus test system.}
\label{fig:IEEE14Bus_Simulation}
\end{figure}

% \begin{figure}
% \centering\vspace{-0.5cm}
% \includegraphics[width=3.5in,, keepaspectratio]{IEEE14Bus_SimulationCL.eps}\vspace{-0.5cm}
% \caption{Simulation results for the economic dispatch problem: IEEE 14 bus test system with constant load.}\label{fig:IEEE14Bus_SimulationCL}
% \end{figure}

% \begin{figure}
% \centering
% \vspace{-0.5cm}\includegraphics[width=3.5in, keepaspectratio]{IEEE14Bus_SimulationUL.eps}
% \vspace{-1cm}
% \caption{Simulation results for the economic dispatch problem: IEEE 14 bus test system with load uncertainty.}\label{fig:IEEE14Bus_SimulationUL}
% \end{figure}

\subsection{Economic dispatch for IEEE 118-bus test systems}
We now consider economic dispatch problems on a larger system, the IEEE-118 bus test system \cite{IEEE118BusWebsite}. This system has $54$ generators connected by bus lines. Each generator $i$ suffers a quadratic cost as a function of generated power $P_i$, i.e., $f_i(P_i) = a_i + b_iP_i + c_i P_i^2$. The coefficients of functions $f_i$ belong to the ranges $a_i\in[6.78,74.33]$, $b_i\in[8.3391,37.6968]$, and $c_i\in[0.0024,0.0697]$. The units of $a,b,c$ are $MBtu, MBtu/MW$ and $MBtu/MW^2$, respectively. Each $P_i$ is constrained on some interval $[P_i^{\text{min}},P_i^{\max}]$ where these values vary as $P_i^{\text{min}}\in[5,150]$ and $P_i^{\text{max}}\in[150,400]$. The unit of power in this system is $MW$. The total load required from the system is assumed to be $P = 6000 (MW)$, which is initially distributed equally to the nodes, i.e., $P_i = P/54$ $\forall i\in\mathcal{V}$. We apply the distributed Lagrangian method for this study, with resulting simulations shown in Fig. \ref{fig:IEEE118_EDP}. The plots in Fig. \ref{fig:IEEE118_EDP} show the convergence of our method within $100$ iterations, implying that our method is applicable to large-scale systems.

\begin{figure}
\centering\vspace{-0.5cm}
\includegraphics[width=3.5in, keepaspectratio]{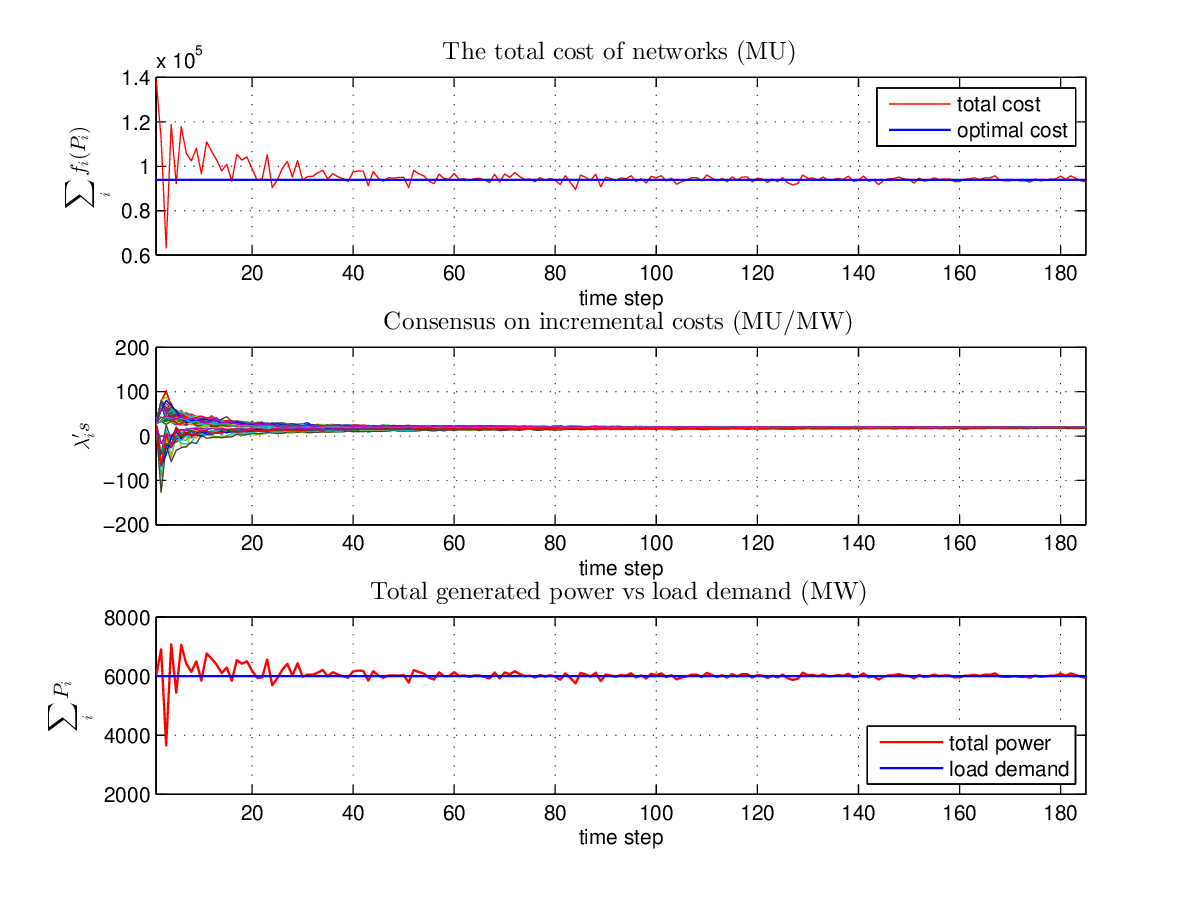}
\vspace{-1cm}
\caption{Simulation results for the economic dispatch problem: IEEE 118 bus test system.}\label{fig:IEEE118_EDP}
\end{figure}

\section{Concluding Remarks}\label{sec:Conclusion}
In this paper, we first introduce our Distributed Lagrangian Method, a fully distributed version of well-known Lagrangian methods, for resource allocation problems over time-varying networks. In particular, this method allows for distributed subgradient algorithms on local copies of the Lagrange multiplier. We then study a Distributed Stochastic Lagrangian Method for the problem when the number of resources is unknown precisely. We show that our algorithm is robust to such uncertainty. Specifically, our algorithm converges at a rate $\mathcal{O}(n\ln(k)/\delta\sqrt{k})$ in expectation to the optimal value when the step-size decays as $\alpha(k) = 1/\sqrt{k}$. Finally, we illustrate the efficacy of our methods for solving distributed economic dispatch problems on IEEE-14 and IEEE-118 bus test systems.

Although it may appear initially that our proposed method is limited to the study of simple resource allocation problems over networks, we believe this approach will provide a useful tool for more general problems in many areas, especially in power systems, which is our main motivation. Potential applications of our method following this work include additional control problems in power systems, for example, frequency control problems\cite{StevenLow2014, Dorfler2016}, where distributed controllers are preferred to decentralized controllers; and multi-area optimal dispatch problems between connected regional power systems \cite{BaldickChatterjee14}. We believe that our proposed framework can be generalized to solve these problems, which we leave for future studies.

\section*{Acknowledgements}
The first author would like to thank Alex Olshevsky for many useful discussions. This work was partially funded by NSF grants ECCS 15-09302 and CNS 15-44953.

\bibliographystyle{plain}
\bibliography{refs}

\begin{thebibliography}{10}

\bibitem{IEEE118BusWebsite}
{IEEE} 118 bus system [online].
\newblock \url{http://motor.ece.iit.edu/data/JEAS_IEEE118.doc}.

\bibitem{IEEE14BusWebsite}
{IEEE} 14 bus system [online].
\newblock \url{http://icseg.iti.illinois.edu/ieee-14-bus-system/}.

\bibitem{Aybat1}
N.~S. Aybat and E.~Y. Hamedani.
\newblock {A distributed ADMM-like method for resource sharing under conic
  constraints over time-varying networks }.
\newblock Available at: \url{https://arxiv.org/abs/1611.07393}.

\bibitem{Aybat2}
N.~S. Aybat and E.~Y. Hamedani.
\newblock istributed primal-dual method for multi-agent sharing problem with
  conic constraints.
\newblock In {\em 50th Asilomar Conference on Signals, Systems and Computers},
  2016.

\bibitem{BaldickChatterjee14}
R.~Baldick and D.~Chatterjee.
\newblock Coodinated dispatch of regional transmission organizations: Theory
  and example.
\newblock {\em Computer \& Operations Research}, 41:319--332, 2014.

\bibitem{Bertsekas1999}
D.~Bertsekas.
\newblock {\em Nonlinear Programming: 2nd Edition}.
\newblock Cambridge, MA: Athena Scientific, 1999.

\bibitem{Bertsekas2003}
D.~Bertsekas, A.~Nedi\'c, and A.~Ozdaglar.
\newblock {\em Convex Analysis and Optimization}.
\newblock Cambridge, MA: Athena Scientific, 2004.

\bibitem{Binetti2014}
G.~Binetti, A.~Davoudi, F.L. Lewis, D.~Naso, and B.~Turchiano.
\newblock Distributed consensus-based economic dispatch with transmission
  losses.
\newblock {\em IEEE Transactions on Power Systems}, 29(4):1711--1720, June
  2014.

\bibitem{CortesChekuri2015}
A.~Cherukuri and J.~Cortes.
\newblock Distributed generator coordination for initialization and anytime
  optimization in economic dispatch.
\newblock {\em IEEE Transactions on Control of Network Systems}, 2(3):226 --
  237, Sept. 2015.

\bibitem{Cortes2004}
J.~Cortes, S.~Martinez, T.~Karatas, and F.~Bullo.
\newblock Coverage control for mobile sensing networks.
\newblock {\em IEEE Transactions on Automatic Control}, 20(2):243 -- 255, 2004.

\bibitem{DoanB2016}
T.~T. Doan and C.~L. Beck.
\newblock Distributed {L}agrangian methods for network resource allocation.
\newblock In {\em 2017 IEEE Conference on Control Technology and Applications
  (CCTA)}, pages 650--655, Aug 2017.

\bibitem{Doan17}
T.~T. Doan and A.~Olshevsky.
\newblock {Distributed Resource Allocation on Dynamic Networks in Quadratic
  Time}.
\newblock {\em Systems {\&} Control Letters}, 99:57–63, January 2017.

\bibitem{DoanBB2017}
Thinh~T. Doan, Subhonmesh Bose, and Carolyn~L. Beck.
\newblock Distributed {L}agrangian method for tie-line scheduling in power
  grids under uncertainty.
\newblock {\em SIGMETRICS Perform. Eval. Rev.}, 45(2), October 2017.

\bibitem{DominguezGarcia2012}
A.~Dom\'{i}nguez-Garc\'{i}a, S.T. Cady, and C.N. Hadjicostis.
\newblock Decentralized optimal dispatch of distributed energy resources.
\newblock In {\em {51st IEEE Conference on Decision and Control}}, pages
  3688--3693, 2012.

\bibitem{Dorfler2016}
F.~D{\"o}rfler and S.~Grammatico.
\newblock Amidst centralized and distributed frequency control in power
  systems.
\newblock In {\em IEEE American Control Conference (ACC)}, Boston, USA, July
  2016.

\bibitem{Dorfler2015}
F.~D{\"o}rfler, J.W. Simpson-Porco, and F.~Bullo.
\newblock Breaking the hierarchy: Distributed control and economic optimality
  in microgrids.
\newblock {\em IEEE Transactions on Automatic Control}, 3(3):241 -- 253, Sept.
  2016.

\bibitem{HoServiSuri80}
Y.~C. Ho, L.~D. Servi, and R.~Suri.
\newblock A class of center-free resource allocation algorithms.
\newblock {\em Large Scale Systems}, 1:51--62, 1980.

\bibitem{Kar2012}
S.~Kar and G.~Hug.
\newblock Distributed robust economic dispatch in power systems: A consensus +
  innovations approach.
\newblock In {\em {2012 IEEE Power and Energy Society General Meeting}}, pages
  1--8, 2012.

\bibitem{LakshmananDeFarias08}
H.~Lakshmanan and D.P. de~Farias.
\newblock Decentralized resource allocation in dynamic networks of agents.
\newblock {\em {SIAM Journal on Optimization}}, 19(2):911--940, 2008.

\bibitem{Necoara13}
I.~Necoara.
\newblock Random coordinate descent algorithms for multi-agent convex
  optimization over networks.
\newblock {\em {IEEE Trans. on Automatic Control}}, 58(8):2001--2012, 2013.

\bibitem{Nedic2015}
A.~Nedi\'{c} and A.~Olshevsky.
\newblock Distributed optimization over time-varying directed graphs.
\newblock {\em IEEE Transactions on Automatic Control}, 60(3):601--615, 2015.

\bibitem{Nedic2016a}
A.~Nedi\'{c} and A.~Olshevsky.
\newblock Stochastic gradient-push for strongly convex functions on
  time-varying directed graphs.
\newblock {\em IEEE Transactions on Automatic Control}, 61(12):3936--3947, Dec
  2016.

\bibitem{NedicOS2017}
A.~Nedi\'{c}, A.~Olshevsky, and W.~Shi.
\newblock {Improved Convergence Rates for Distributed Resource Allocation}.
\newblock Available at: \url{https://arxiv.org/abs/1706.05441}.

\bibitem{Nedic2009}
A.~Nedi\'{c} and A.~Ozdaglar.
\newblock Distributed subgradient methods for multi-agent optimization.
\newblock {\em IEEE Transactions on Automatic Control}, 54(1):48--61, 2009.

\bibitem{Nedic2010}
A.~Nedi\'{c}, A.~Ozdaglar, and P.~A. Parrilo.
\newblock Constrained consensus and optimization in multi-agent networks.
\newblock {\em IEEE Transactions on Automatic Control}, 55(4):922--938, 2010.

\bibitem{Nesterov04}
Y.~Nesterov.
\newblock {\em Introductory Lectures on Convex Optimization: A Basic Course}.
\newblock Kluwer Academic Publishers, Norwell, MA, 2004.

\bibitem{Polyak1987}
B.~Polyak.
\newblock {\em Introduction to optimization}.
\newblock Optimization software, Inc., Publication division, New York, 1987,
  1987.

\bibitem{Robbins1971}
H.~Robbins and D.~Siegmund.
\newblock A convergence theorem for nonnegative almost supermartingales and
  some applications.
\newblock {\em Optimization Methods in Statistics, Academic Press, New York},
  pages 233--257, 1971.

\bibitem{ShalevShwartz2012}
Shai Shalev-Shwartz.
\newblock Online learning and online convex optimization.
\newblock {\em Foundations and Trends in Machine Learning}, 4:107--194, 2012.

\bibitem{SharmaSB2012}
P.~Sharma, S.~M. Salapaka, and C.~L. Beck.
\newblock Entropy-based framework for dynamic coverage and clustering problems.
\newblock {\em IEEE Transactions on Automatic Control}, 57(1):135--150, Jan
  2012.

\bibitem{Srikant2004}
R.~Srikant.
\newblock {\em The Mathematics of Internet Congestion Control}.
\newblock Birkhauser, 2004.

\bibitem{SYbook2014}
R.~Srikant and L.~Ying.
\newblock {\em Communication Networks: An Optimization, Control and Stochastic
  Networks Perspective}.
\newblock Cambridge University Press, New York, NY, USA, 2014.

\bibitem{XiaoBoyd06}
L.~Xiao and S.~Boyd.
\newblock Optimal scaling of a gradient method for distributed resource
  allocation.
\newblock {\em {J. of Optimization Theory and Appl.}}, 129(3):469--488, 2006.

\bibitem{Xing2015}
H.~Xing, Y.~Mou, M.~Fu, and Z.~Lin.
\newblock Distributed bisection method for economic power dispatch in smart
  grid.
\newblock {\em IEEE Transactions on Power Systems}, 30(6):3024--3035, Aug.
  2015.

\bibitem{Yang2013}
S.~Yang, S.~Tan, and J.-X. Xu.
\newblock Consensus based approach for economic dispatch problem in a smart
  grid.
\newblock {\em IEEE Transactions on Power Systems}, 28(4):4416--4426, Oct.
  2013.

\bibitem{Yang2016}
T.~Yang, J.~Lu, D.~Wu, J.~Wu, G.~Shi, Z.~Meng, and K.~H. Johansson.
\newblock A distributed algorithm for economic dispatch over time-varying
  directed networks with delays.
\newblock {\em IEEE Transactions on Industrial Electronics}, PP(99), 2016.

\bibitem{StevenLow2014}
C.~Zhao, U.~Topcu, N.~Li, and S.H. Low.
\newblock Design and stability of load-side primary frequency control in power
  systems.
\newblock {\em IEEE Transactions on Automatic Control}, 59(5):1177--1189, April
  2014.

\bibitem{Zhu2008}
J.~Zhu.
\newblock {\em Optimization of Power System Operation}.
\newblock Wiley-IEEE Press, 2nd edition, 2008.

\end{thebibliography}

%\newpage
\appendix
%!TEX root = allocation_TAC.tex

%\section{Proofs of Main Results}\label{sec:proof}
We provide here the proofs of main results from Sections \ref{sec:DLM} and \ref{sec:DSLM}. We start with some preliminaries and notation. 

\subsection{Preliminaries and Notations} 
In the sequel, we denote by $\mathcal{S}$ the feasible set of ${\Psf}$ $$\mathcal{S}=\{\mathbf{x}\in\mathcal{X} \ | \ \sum_{i=1}^n (x_i-b_i) = 0\}.$$ 
Let $f$ be defined as $f(\xbf) = \sum_{i=1}^n f_i(x_i)$.  Since $\mathcal{S}$ is compact and $f$ is continuous, there exists an optimal solution $\mathbf{x}^*=(x_1^*,x_2^*,\ldots,x_n^*)\in\mathcal{S}$ of {\sf P}. However, this solution is not unique. We denote the set of solutions of ${\Psf}$ as $\mathcal{S}^*$. Let $\lambda^*$ be an optimizer of ${\sf DP}$ and $\xbf^*$ be the corresponding optimizer of ${\Psf}$, such that $(\xbf^*,\lambda^*)$ is a saddle point of $\Lcal$ in \eqref{alg:Lagrangian}, i.e., 
\begin{align}
\Lcal(\xbf^*,\lambda)\leq\Lcal(\xbf^*,\lambda^*)\leq\Lcal(\xbf,\lambda^*),\; \forall \xbf\in \Xcal, \lambda\in\Rset.\label{appendix:saddlepoint}
\end{align}
Given a vector $\bld$, let
$\bar{\lambda} = \frac{1}{n}\sum_{i=1}^n \lambda_i.$ Finally, we review here an important result of the so-called distributed perturbed averaging algorithm studied in \cite{Nedic2015}. This result will play a key step in our analysis in the sequel. In particular, we consider a network of $n$ nodes which can exchange messages through a given sequence of time-varying undirected graphs $\Gcal(k) = (\mathcal{V},\mathcal{E}(k))$. Each node $i$ maintains a scalar variable $\lambda_i$ and  updates its variable as follows:
\begin{align}
\lambda_i(k+1) = \sum_{j\in \mathcal{N}_i} a_{ij}(k)\lambda_i(k) + \epsilon_i(k),\;\forall i\in\Vcal\label{appendix:ipertbAver}
\end{align}
where $\epsilon_i(k)$ is some disturbance at node $i$. By allowing $\epsilon_i(k)$ to take different forms, we can study different distributed algorithms; for example, this method reduces to distributed subgradient methods when $\epsilon_i(k) =  - \alpha(k)g_i(v_i(k))$ $\forall\, i\in\Vcal$.

We state here some important results which we will utilize in our development later. We first state a result on almost supermartingale convergence studied in \cite{Robbins1971} (see also in \cite{Polyak1987}, Lemma $11$, Chapter 2.2), which many refer to as Robbins-Siegmund Lemma. We then consider an important lemma of distributed perturbed averaging methods studied in \cite{Nedic2015}.
\begin{lem}[\cite{Robbins1971}]\label{appendix_lem:StochasticConv}
Let $\{y(k)\},$ $\{z(k)\}$, $\{w(k)\}$, and $\{\beta(k)\}$ be non-negative sequences of random variables and satisfy 
\begin{align}
&\mathbb{E}\Big[\,y(k+1)\,|\,\mathcal{F}_{k}\,\Big] \leq (1+\beta(k))y(k) - z(k) + w(k)\\
&\sum_{k=0}^\infty \beta(k) < \infty \text{ a.s, }\quad \sum_{k=0}^\infty w(k) < \infty \text{ a.s},
\end{align}
where $\mathcal{F}_k = \{y(0),\ldots,y(k)\}$, the history of $y$ up to time $k$. Then $\{y(k)\}$ converges a.s., and  $\sum_{k=0}^{\infty} z(k)<\infty$ a.s.
\end{lem}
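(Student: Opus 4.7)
The plan is to reduce the inequality to a genuine non-negative supermartingale by a deflation-plus-compensator trick, and then invoke Doob's classical convergence theorem. First I would introduce the deflator $a(k) = \prod_{j=0}^{k-1}(1+\beta(j))^{-1}$, with $a(0)=1$. Under the hypothesis $\sum \beta(k) < \infty$ a.s.\ (and $\beta(k)\ge 0$), the sequence $a(k)$ is non-increasing, $\mathcal{F}_k$-measurable, and converges almost surely to a strictly positive limit $a_\infty \in (0,1]$. In particular $a(k) \ge a_\infty > 0$ a.s. Also $a(j+1) \le 1$ so $\sum_{j=0}^{\infty} a(j+1)w(j) \le \sum_{j=0}^{\infty} w(j) < \infty$ a.s.

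Next I would introduce the auxiliary process
\begin{equation*}
V(k) \;=\; a(k)y(k) \;+\; \sum_{j=0}^{k-1} a(j+1)z(j) \;+\; \sum_{j=k}^{\infty} a(j+1)w(j),
\end{equation*}
which is clearly non-negative since every term is. The key computation is that multiplying the given inequality by $a(k+1)$ and using $a(k+1)(1+\beta(k)) = a(k)$ yields
\begin{equation*}
a(k+1)\,\mathbb{E}[y(k+1)\mid\mathcal{F}_k] \;\le\; a(k)y(k) - a(k+1)z(k) + a(k+1)w(k).
\end{equation*}
Taking conditional expectation term by term in the definition of $V(k+1)$ and substituting this bound, the $\pm a(k+1)z(k)$ and $\pm a(k+1)w(k)$ terms telescope perfectly and one obtains $\mathbb{E}[V(k+1)\mid\mathcal{F}_k] \le V(k)$ a.s. Thus $\{V(k)\}$ is a non-negative supermartingale.

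By Doob's supermartingale convergence theorem, $V(k)$ converges a.s.\ to a finite limit. Since the tail sum $\sum_{j=k}^{\infty} a(j+1)w(j) \to 0$ a.s., it follows that $a(k)y(k) + \sum_{j=0}^{k-1} a(j+1)z(j)$ also converges a.s. The partial sums $\sum_{j=0}^{k-1} a(j+1)z(j)$ are monotone non-decreasing, so either both terms converge separately or neither does; the sum of two convergent-or-divergent-together non-negative sequences converging forces each to converge. Hence $\sum_{j=0}^{\infty} a(j+1)z(j) < \infty$ a.s.\ and $a(k)y(k)$ converges a.s. Dividing by $a(k) \to a_\infty > 0$ gives $y(k)$ converges a.s., and since $a(j+1) \ge a_\infty$ we obtain $\sum_{j=0}^\infty z(j) \le a_\infty^{-1}\sum_{j=0}^\infty a(j+1)z(j) < \infty$ a.s., completing the claim.

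The main obstacle is choosing the compensator $V(k)$ so that it is simultaneously (i) non-negative (required for Doob's theorem), (ii) a supermartingale (requires the deflator $a(k+1)$ to absorb the $(1+\beta(k))$ factor via the identity $a(k+1)(1+\beta(k))=a(k)$), and (iii) set up so that the conclusions about $y(k)$ and $\sum z(j)$ can be extracted after passing to the limit. The backward-in-time tail sum $\sum_{j=k}^\infty a(j+1)w(j)$ is the right way to account for the ``bad'' additive term $w(k)$ without disturbing the supermartingale property, but it requires the \emph{a.s.} finiteness of $\sum w(j)$ to be well-defined in the first place.
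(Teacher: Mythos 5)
First, a point of comparison: the paper does not prove this lemma at all --- it is quoted as the classical Robbins--Siegmund almost-supermartingale lemma and attributed to \cite{Robbins1971} (see also Polyak's book), so your attempt must stand on its own. Your opening move is exactly right and matches the classical argument: the deflator $a(k)=\prod_{j=0}^{k-1}(1+\beta(j))^{-1}$ satisfies $a(k+1)(1+\beta(k))=a(k)$, converges a.s.\ to a positive limit, and turns the hypothesis into $a(k+1)\mathbb{E}[y(k+1)\mid\mathcal{F}_k]\le a(k)y(k)-a(k+1)z(k)+a(k+1)w(k)$. The gap is in the compensator. The process $V(k)=a(k)y(k)+\sum_{j<k}a(j+1)z(j)+\sum_{j\ge k}a(j+1)w(j)$ is \emph{not adapted} to $\mathcal{F}_k$: the tail $\sum_{j\ge k}a(j+1)w(j)$ depends on the future random variables $w(k),w(k+1),\dots$, which are not $\mathcal{F}_k$-measurable. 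Hence $V(k)$ is not a supermartingale in the sense required by Doob's theorem, and your ``telescoping'' step silently replaces $\mathbb{E}\big[\sum_{j\ge k+1}a(j+1)w(j)\mid\mathcal{F}_k\big]$ by $\sum_{j\ge k+1}a(j+1)w(j)$, which is not valid. Replacing the tail by its conditional expectation would restore adaptedness but would require $\sum_j w(j)$ to be \emph{integrable}; the hypothesis gives only almost-sure finiteness, and the absence of moment bounds on $w$ is precisely what makes the lemma nontrivial.

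The standard repair is localization rather than a forward-looking compensator. Work with the adapted process $U(k)=a(k)y(k)+\sum_{j<k}a(j+1)z(j)-\sum_{j<k}a(j+1)w(j)$, which your computation does show to be a supermartingale, and for each $c>0$ introduce the stopping time $\tau_c=\inf\{k:\sum_{j<k}a(j+1)w(j)>c\}$. The stopped process $U(k\wedge\tau_c)+c$ is a non-negative supermartingale, hence converges a.s.; on $\{\tau_c=\infty\}$ this yields convergence of $U(k)$ itself, and $\mathbb{P}\big(\bigcup_{c>0}\{\tau_c=\infty\}\big)=1$ because $\sum_j w(j)<\infty$ a.s. From that point your extraction of the conclusions is fine: the partial sums of $a(j+1)z(j)$ are monotone and, together with $a(k)y(k)\ge 0$, must each converge, and dividing by $a(k)\to a_\infty>0$ gives convergence of $y(k)$ and $\sum_j z(j)\le a_\infty^{-1}\sum_j a(j+1)z(j)<\infty$. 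A smaller omission: to pull $a(k+1)$ out of the conditional expectation you need $\beta(k)$ (hence $a(k+1)$) to be $\mathcal{F}_k$-measurable, an adaptedness hypothesis the paper's statement leaves implicit but which your argument should record.
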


\begin{lem}[\cite{Nedic2015}]\label{appendix_lem:pertbconsensus}
Let Assumptions \ref{assump:BConnectivity} and \ref{assump:DoubStocMatrix} hold. Let the sequence $\{\lambda_i(k)\}$, $k\geq 0$, be generated by Eq.\ \eqref{appendix:ipertbAver} with some $\lambda_i(0)\in \mathbb{R}$, for all $i\in\Vcal$. Then the following statements hold:
\begin{itemize}
\item[(1)] For all $i\in\Vcal$ and $k\geq 0$
\begin{equation}
\|\bld(k) - \bar{\lambda}(k)\1\|\leq \delta^{k}\|\bld(0)\| + \sum_{t=0}^{k}\delta^{k-t}\|\boldsymbol{\epsilon}(t)\|.\label{appendix_lem:averbound}
\end{equation}
\item[(2)]
Further if $\lim_{k\rightarrow\infty}\epsilon_i(k) = 0$ for all $i\in\Vcal$ then we have
\begin{align}
\lim_{k\rightarrow\infty} |\,\lambda_i(k) - \bar{\lambda}(k)\,| = 0\quad \text{ for all } i \in\mathcal{V}.\label{appendix_lem:averconv}
\end{align}
\item[(3)] Given a non-increasing positive sequence $\{\alpha(k)\}$ such that $~\sum_{k=0}^{\infty}\alpha(k)\|\boldsymbol{\epsilon}(k)\| <\infty$, then we obtain
\begin{align}
\sum_{k=0}^\infty \alpha(k)|\lambda_i(k)-\bar{\lambda}(k)| < \infty\quad \text{ for all} i\in\Vcal.\label{appendix_lem:finiteSum}
\end{align}
\end{itemize}
\end{lem}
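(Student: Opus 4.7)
The plan is to write the update \eqref{appendix:ipertbAver} in matrix form, unroll the recursion in closed form, and then reduce all three parts to a single contraction estimate on the time-varying transition operator. Concretely, let $\Abf(k)=[a_{ij}(k)]$ and set $\boldsymbol{\epsilon}(k)=(\epsilon_1(k),\ldots,\epsilon_n(k))^T$ so that the update becomes $\bld(k+1)=\Abf(k)\bld(k)+\boldsymbol{\epsilon}(k)$. Defining the transition matrix $\Phi(k,s):=\Abf(k-1)\Abf(k-2)\cdots\Abf(s)$ for $k>s$ and $\Phi(k,k):=\Ibf$, iteration gives
\begin{align*}
\bld(k)=\Phi(k,0)\bld(0)+\sum_{t=0}^{k-1}\Phi(k,t+1)\,\boldsymbol{\epsilon}(t).
\end{align*}
Since each $\Abf(k)$ is doubly stochastic by Assumption \ref{assump:DoubStocMatrix}, the running average satisfies $\bar{\lambda}(k+1)=\bar{\lambda}(k)+\bar{\epsilon}(k)$, which is equivalent to saying $J\bld(k)=J\Phi(k,0)\bld(0)+\sum_{t=0}^{k-1}J\boldsymbol{\epsilon}(t)$ with $J:=\tfrac{1}{n}\1\1^T$. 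Subtracting yields the closed form
\begin{align*}
\bld(k)-\bar{\lambda}(k)\1=(\Phi(k,0)-J)\bld(0)+\sum_{t=0}^{k-1}(\Phi(k,t+1)-J)\boldsymbol{\epsilon}(t).
\end{align*}

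The heart of the argument is a geometric decay bound $\|\Phi(k,s)-J\|_2\le\delta^{k-s}$ for all $k\ge s\ge 0$, with $\delta$ as in \eqref{notation:delta}. I would prove this in two stages. First, because $J\Abf(k)=\Abf(k)J=J$ and $J^2=J$, one has $\Phi(k,s)-J=(\Abf(k-1)-J)\cdots(\Abf(s)-J)$, so submultiplicativity reduces the claim to controlling the operator $\Abf(k)-J$ on vectors orthogonal to $\1$. Second, under Assumption \ref{assump:BConnectivity}, the product of any $B$ consecutive matrices in the sequence is a scrambling stochastic matrix whose second-largest singular value is at most $(1-1/(4n^3))$, a result established in \cite{Nedic2015}; combined with the per-step singular-value bound $\sigma_2(\Abf(k))\le\delta$ also encoded in \eqref{notation:delta}, this yields the desired contraction on $\1^\perp$. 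Applying the contraction to the closed form, using $\|\boldsymbol{\epsilon}(t)\|\ge\|(\Ibf-J)\boldsymbol{\epsilon}(t)\|$ and $\|\bld(0)\|\ge\|(\Ibf-J)\bld(0)\|$, gives \eqref{appendix_lem:averbound} for each coordinate $i$.

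Parts (2) and (3) then follow from part (1) by routine analytical manipulations of the bound. For (2), given $\epsilon_i(k)\to 0$, fix any $\eta>0$ and split $\sum_{t=0}^{k}\delta^{k-t}\|\boldsymbol{\epsilon}(t)\|$ into $t<k/2$ and $t\ge k/2$: the first piece is bounded by $\delta^{k/2}\sum_{t}\|\boldsymbol{\epsilon}(t)\|$ times a finite constant and vanishes as $k\to\infty$, while the second is bounded by $(1-\delta)^{-1}\sup_{t\ge k/2}\|\boldsymbol{\epsilon}(t)\|$ which becomes smaller than $\eta$ for large $k$; combined with $\delta^k\|\bld(0)\|\to0$ this gives $|\lambda_i(k)-\bar{\lambda}(k)|\to 0$. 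For (3), multiply \eqref{appendix_lem:averbound} by $\alpha(k)$ and sum over $k$; interchanging the order of summation in the double sum $\sum_k\alpha(k)\sum_{t\le k}\delta^{k-t}\|\boldsymbol{\epsilon}(t)\|$, then using that $\alpha$ is non-increasing (so $\alpha(k)\le\alpha(t)$ for $k\ge t$), factors the double sum as $\sum_t\alpha(t)\|\boldsymbol{\epsilon}(t)\|\cdot\sum_{k\ge t}\delta^{k-t}\le (1-\delta)^{-1}\sum_t\alpha(t)\|\boldsymbol{\epsilon}(t)\|$, which is finite by hypothesis; the $\delta^k\|\bld(0)\|$ term contributes the convergent series $\|\bld(0)\|\sum_k\alpha(k)\delta^k$.

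\textbf{Main obstacle.} The only nontrivial step is the contraction bound $\|\Phi(k,s)-J\|_2\le\delta^{k-s}$ for a product of time-varying doubly stochastic matrices under the $B$-step connectivity assumption rather than per-step connectivity. Proving the scrambling property of $B$-block products and quantifying the contraction rate by $1-1/(4n^3)$ requires a careful combinatorial argument tracing how information propagates through the union graph over a length-$B$ window, and this is precisely the technical content borrowed from \cite{Nedic2015}. Given that ingredient, everything else reduces to unrolling a linear recursion and manipulating geometric sums.
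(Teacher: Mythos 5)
The paper does not actually prove this lemma: it is imported verbatim as a cited result from \cite{Nedic2015}, so there is no in-paper proof to compare against. Your reconstruction is the standard argument and is essentially sound. You correctly reduce everything to the closed-form expression $\bld(k)-\bar{\lambda}(k)\1=(\Phi(k,0)-J)\bld(0)+\sum_{t}(\Phi(k,t+1)-J)\boldsymbol{\epsilon}(t)$ and to a geometric contraction estimate on $\Phi(k,s)-J$, and you correctly identify that the only genuinely technical ingredient --- the quantitative scrambling/contraction of inhomogeneous products of doubly stochastic matrices under the $B$-step connectivity of Assumption \ref{assump:BConnectivity} --- is precisely what must be borrowed from \cite{Nedic2015}, which is exactly where the paper outsources it too. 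Parts (2) and (3) are then handled by the usual convolution-with-a-geometric-kernel manipulations, and your order-of-summation argument for (3) (using that $\alpha$ is non-increasing so $\alpha(k)\le\alpha(t)$ for $k\ge t$) is the right one.

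Two small points of care. First, the clean bound $\|\Phi(k,s)-J\|_2\le\delta^{k-s}$ does not follow for windows whose length is not a multiple of $B$: individual matrices $\Abf(k)$ may have $\sigma_2(\Abf(k))=1$ when $\Gcal(k)$ is disconnected, so from the $B$-block scrambling property one only gets $\|\Phi(k,s)-J\|_2\le(1-\tfrac{1}{4n^3})^{\lfloor (k-s)/B\rfloor}$, i.e.\ the stated rate up to a harmless multiplicative constant absorbed by the form of \eqref{appendix_lem:averbound} in the reference; relatedly, your reading of \eqref{notation:delta} as asserting a per-step bound $\sigma_2(\Abf(k))\le\delta$ is not what that display literally says (its inequality direction appears to be a typo in the paper, and you should state explicitly which property of $\delta$ you are actually using). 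Second, in part (2) the phrase ``bounded by $\delta^{k/2}\sum_t\|\boldsymbol{\epsilon}(t)\|$ times a finite constant'' is loose, since $\sum_t\|\boldsymbol{\epsilon}(t)\|$ need not be finite when only $\epsilon_i(k)\to 0$ is assumed; the correct statement is that the head of the convolution is at most $\tfrac{k}{2}\,\delta^{k/2}\sup_t\|\boldsymbol{\epsilon}(t)\|$, which still vanishes. Neither issue is a gap in the underlying idea.
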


\subsection{Proofs of Results in Section \ref{sec:DLM}}\label{appendix:DLM_thmAsym}

We present here the proof of part (b) in Theorem \ref{alg_DLM:thmAsymConv}; recall part (a) is a consequence of \cite[Proposition 4]{Nedic2010}. 
\begin{proof}[Proof of part (b) Theorem \ref{alg_DLM:thmAsymConv}]
Note that $(\xbf^*,\lambda^*)$ is a saddle point of $\Lcal$, i.e., $(\xbf^*,\lambda^*)$ satisfies \eqref{appendix:saddlepoint}. Eq. \eqref{appendix:iLagrangian} gives
\begin{align}
\sum_{i=1}^n\mathcal{L}_i(x_i,v_i) = \sum_{i=1}^n f_i(x_i) + v_i(x_i-b_i).\nonumber
\end{align}
To show our main result, we will show the following relation,
\begin{align}
&0\leq \sum_{i=1}^n\mathcal{L}_i(x_i^*,v_i(k))-\mathcal{L}_i(x_i(k),v_i(k))\nonumber\\
&\leq C\|\mathbf{v}(k)-\lambda^*\mathbf{1}\|  + \sum_{i=1}^nv_i(k)(x_i^*-b_i).\label{appendix_DLM:Eq1}
\end{align}
We note that by part (a) $\lim_{k\rightarrow\infty}\lambda_i(k)= \lambda^*$, for all $i\in\Vcal$, implying ${\lim_{k\rightarrow\infty}v_i(k) = \lambda^*}$ since $\v{A}(k)$ is doubly stochastic. In addition, the condition  $\sum_{i=1}^n (x_i^*- b_i) = 0$ gives
\begin{align}
%&\lim_{k\rightarrow\infty} \|\mathbf{v}(k)-\lambda^*\1\| = 0,\nonumber\\
&\lim_{k\rightarrow\infty}\sum_{i=1}^nv_i(k+1)(x_i^*-b_i) = 0.\nonumber
\end{align}
Thus, Eq.\ \eqref{appendix_DLM:Eq1} gives
\begin{align}
0&\leq\lim_{k\rightarrow\infty}\sum_{i=1}^n\Big\{ \Lcal_i(x_i^*,v_i(k))-\mathcal{L}_i(x_i(k),v_i(k))\Big\}\nonumber\\
&=\lim_{k\rightarrow\infty}\sum_{i=1}^n\Big\{\Lcal_i(x_i^*,\lambda^*)-\mathcal{L}_i(x_i(k),\lambda^*)\Big\} = 0,\nonumber
\end{align}
which by \eqref{alg:Lagrangian} and \eqref{appendix:iLagrangian} gives part (b), i.e.,
\begin{align}
&0\leq f(\mathbf{x^*}) - \lim_{k\rightarrow\infty}\sum_{i=1}^n\mathcal{L}_i(x_i(k),\lambda_i(k))= 0,\label{appendix_DLM:Eq2}
\end{align}
where we use $\mathcal{L}(\mathbf{x}^*,\lambda^*) = f(\xbf^*)$ due to the strong duality.

We now proceed to show \eqref{appendix_DLM:Eq1}. Since $x_i(k+1)$ satisfies Eq.\ \eqref{DLM:stepMinimize} and by the definition of $\mathcal{L}_i$ we have for any $k\geq 0$,
\begin{align}
0 \leq \mathcal{L}_i(x_i^*,v_i(k)) - \mathcal{L}_i(x_i(k),v_i(k)),\quad \forall i\in\mathcal{V},\nonumber
\end{align}
which when summing for all $i\in\mathcal{V}$ implies that
\begin{align}
&0\leq\sum_{i=1}^n\Big\{\Lcal_i(x_i^*,v_i(k))-\Lcal_i(x_i(k),v_i(k))\Big\}\nonumber\\
&= \sum_{i=1}^n\Big\{f_i(x_i^*) + v_i(k)(x_i^*-b_i)\Big\}\nonumber\\
&\qquad -\sum_{i=1}^n\Big\{f_i(x_i(k))+ v_i(k)(x_i(k)-b_i)\Big\}.\label{appendix_DLM:Eq5}
\end{align}
The strong duality and Eqs.\ \eqref{alg:dual} and \eqref{alg:mindual} imply
\begin{align}
\sum_{i=1}^n f_i(x_i^*) = d(\lambda^*) = -q(\lambda^*)= -\sum_{i=1}^nq_i(\lambda^*).\nonumber
\end{align}
Moreover, by Eqs.\ \eqref{DLM:stepMinimize} and \eqref{alg:mindual} we have
\begin{align}
q_i(v_i(k))=- f_i(x_i(k)) - v_i(k)(x_i(k)-b_i).\nonumber
\end{align}
Substituting the previous two preceding relations into Eq.\ \eqref{appendix_DLM:Eq5} and by Eq.\ \eqref{alg_DLM:boundSG} we obtain Eq.\ \eqref{appendix_DLM:Eq1}, i.e.,
\begin{align*}
0&\leq\sum_{i=1}^n\Big\{\mathcal{L}_i(x_i^*,v_i(k))-\mathcal{L}_i(x_i(k),v_i(k))\Big\}\nonumber\\
&= \sum_{i=1}^n \Big\{q_i(v_i(k)) - q_i(\lambda^*) + v_i(k)(x_i^*-b_i)\Big\}\nonumber\\
%&\leq \sum_{i=1}^n \Big\{- g_i(v_i(k+1))(\lambda^*-v_i(k+1))\notag\\
%&\qquad\qquad + v_i(k+1)(x_i^*-b_i)\Big\}\nonumber\\
%&\leq \sum_{i=1}^n\Big\{ C_i\,|\,\lambda^*-v_i(k)\,|  + v_i(k)(x_i^*-b_i)\Big\}\notag\\
&\leq C\,\|\,\lambda^*\1-\vbf(k)\,\| + \sum_{i=1}^n v_i(k)(x_i^*-b_i).
\end{align*}
\end{proof}

\subsection{Proofs of Results in Section \ref{sec:DSLM}}
We provide here the proofs for main results presented in Section \ref{subsecDSLM:ConvAnal}. The key idea of our analysis is to study the convergence of distributed stochastic subgradient methods for solving $\sf DP$ over undirected graphs.  We note that distributed stochastic subgradient methods has been studied in \cite{Nedic2016a} for optimization problems defined over directed graphs with strongly convex objective functions. However, we consider here the case of convex objective functions. We, therefore, provide a convergence analysis of such methods, which is more straightforward than the one in \cite{Nedic2016a}.         

Let $\mathcal{L}_i^s:\Rset\times\Rset\times\Rset\rightarrow\Rset$ be the local stochastic Lagrangian function at node $i$ defined as,
\begin{align}
\mathcal{L}_i^s(x_i,v_i,\ell_i) = f_i(x_i) + v_i(x_i-\ell_i).\label{appendix:StociLagrangian}
\end{align}
We define $\mathcal{F}_k$ to be all the information generated by {\sf DSLM} up to time $k$, i.e., all the $x_i(k),\lambda_i(k)$ and so forth for $k\geq 0$. Let $D=\sum_{i=1}^nD_i$.  We start with the analysis of Theorem \ref{alg_DSLM:thmAsymConv}. 

%\begin{enumerate}[leftmargin = 4.5mm]
%\item  
\begin{proof}[Part (a)]
Let $\lambda^*$ be a minimizer of dual problem \eqref{alg:dual}. Using Eqs.\ \eqref{DSLM:stepConsensus} and \eqref{DSLM:stepSubgradient} in Algorithm \ref{alg:DSLM}, and Eq.\ \eqref{alg_DSLM:isubgradient} we obtain 
\begin{align*}
\bar{\lambda}(k+1) = \bar{\lambda}(k) - \frac{\alpha(k)}{n}\sum_{i=1}^n g_i^s(v_i(k+1)).
\end{align*}
The preceding equation implies that
\begin{align*}
&\Big(\bar{\lambda}(k+1) - \lambda^*\Big)^2\notag\\
%&= \Big(\bar{\lambda}(k) - \frac{\alpha(k)}{n}\sum_{i=1}^n g_i^s(v_i(k+1))-\lambda^*\Big)^2\nonumber\\
&= \Big(\bar{\lambda}(k)-\lambda^*\Big)^2 + \frac{\alpha^2(k)}{n^2}\sum_{i=1}^n\Big[g_i^s(v_i(k+1))\Big]^2 \nonumber\\
&\qquad-\frac{2\alpha(k)}{n}\sum_{i=1}^n g_i^s(v_i(k+1))(\bar{\lambda}(k) - \lambda^*)\displaybreak[0]\notag\\
&\stackrel{\eqref{alg_DSLM:idualSB}}{\leq} \Big(\bar{\lambda}(k)-\lambda^*\Big)^2 + \frac{D^2\alpha^2(k)}{n}\nonumber\\
&\qquad-\frac{2\alpha(k)}{n}\sum_{i=1}^n g_i^s(v_i(k+1))(\bar{\lambda}(k) - \lambda^*),
\end{align*}
which by taking the conditional expectation w.r.t. $\Fcal_k$ yields
\begin{align}
&\Eset\Big[\big(\bar{\lambda}(k+1) - \lambda^*\big)^2\,|\,\Fcal_k\Big]= \Big(\bar{\lambda}(k)-\lambda^*\Big)^2 + \frac{D^2\alpha^2(k)}{n}\nonumber\\
&\qquad\qquad -\frac{2\alpha(k)}{n}\sum_{i=1}^n g_i(v_i(k+1))(\bar{\lambda}(k) - \lambda^*),\label{appendix_DSLM:Eq1}
\end{align}
where by Eqs.\ \eqref{alg_DSLM:loadmeasurement} and \eqref{alg_DSLM:isubgradient}, and Assumption \ref{assump:boundednoise} we have
$$\Eset\Big[\,g_i^s(v_i(k+1))\,|\,\mathcal{F}_k\Big] = g_i(v_i(k+1)) \in \partial q_i(v_i(k+1)).$$ 
Consider the last term on the right-hand side of Eq.\ \eqref{appendix_DSLM:Eq1}
\begin{align*}
&-\frac{2\alpha(k)}{n}\sum_{i=1}^n g_i(v_i(k+1))(\bar{\lambda}(k) - \lambda^*)\notag\\
&\quad = -\frac{2\alpha(k)}{n}\sum_{i=1}^n g_i(v_i(k+1))(\bar{\lambda}(k) -v_i(k+1))\notag\\
&\quad\qquad -\frac{2\alpha(k)}{n}\sum_{i=1}^n g_i(v_i(k+1))(v_i(k+1) - \lambda^*)\notag\\
&\quad \stackrel{\eqref{alg_DSLM:idualSB}}{\leq} \frac{2\alpha(k)}{n}\sum_{i=1}^n D_i\,|\,\bar{\lambda}(k) - v_i(k+1)\,|\notag\\
&\quad\qquad -\frac{2\alpha(k)}{n}\sum_{i=1}^n\Big( q_i(v_i(k+1)) - q_i(\lambda^*)\Big)\notag\\
&\quad\leq \frac{2D\alpha(k)}{n}\|\,\vbf(k+1) - \bar{\lambda}(k)\1\|\notag\\
&\quad\qquad -\frac{2\alpha(k)}{n}\sum_{i=1}^n\Big( q_i(v_i(k+1)) - q_i(\bar{\lambda}(k))\Big)\notag\\
&\quad\qquad - \frac{2\alpha(k)}{n}\sum_{i=1}^n\Big( q_i(\bar{\lambda}(k)) - q_i(\lambda^*)\Big)\notag\\
%&\quad\stackrel{\eqref{alg_DSLM:idualSB}}{\leq} \frac{2D\alpha(k)}{n}\|\,\bld(k) - \bar{\lambda}(k)\1\| - \frac{2\alpha(k)}{n}\Big( q(\bar{\lambda}(k)) - q^*\Big)\notag\\
%&\quad\qquad + \frac{2\alpha(k)}{n}\sum_{i=1}^n D_i\,|\, v_i(k+1) - \bar{\lambda}(k)\,|\notag\\
&\quad\stackrel{\eqref{alg_DSLM:idualSB}}{\leq}   \frac{4D\alpha(k)}{n}\|\,\bld(k) - \bar{\lambda}(k)\1\| - \frac{2\alpha(k)}{n}\Big( q(\bar{\lambda}(k)) - q^*\Big), 
\end{align*}
where we have used the following inequality 
\begin{align*}
\|\vbf(k+1) - \bar{\lambda}(k)\1\| \leq \|\bld(k) - \bar{\lambda}(k)\1\|.
\end{align*}
Substituting the equation above into Eq.\ \eqref{appendix_DSLM:Eq1} yields 
\begin{align}
&\Eset\Big[\big(\bar{\lambda}(k+1) - \lambda^*\big)^2\,|\,\Fcal_k\Big]\notag\\
&\leq \Big(\bar{\lambda}(k)-\lambda^*\Big)^2 + \frac{D^2\alpha^2(k)}{n}+ \frac{4D\alpha(k)}{n}\|\,\bld(k) - \bar{\lambda}(k)\1\|\nonumber\\
&\qquad - \frac{2\alpha(k)}{n}\Big( q(\bar{\lambda}(k)) - q^*\Big).\label{appendix_DSLM:Eq2}
\end{align}
Recall that Eq.\ \eqref{DSLM:stepSubgradient} in Algorithm \ref{alg:DSLM} is a special case of the perturbed averaging protocol Eq.\ \eqref{appendix:ipertbAver} where 
\begin{equation*}
\epsilon_i(k) = -\alpha(k)g_i(v_i(k+1)).
\end{equation*}
Since $\alpha(k)$ satisfies Eq.\ \eqref{alg_DSLM:thmStepsize} and by Eq.\ \eqref{alg_DSLM:idualSB} we obtain
\begin{align*}
\sum_{k=0}^\infty \alpha(k)\|\boldsymbol{\epsilon}(k)\| 
%&= \sum_{k=0}^\infty \alpha^2(k)\|\gbf(\vbf(k+1))\|\nonumber\\
&\leq D\sum_{k=0}^\infty\alpha^2(k)<\infty \text{ a.s.,}
\end{align*}
which satisfies Eq.\ \eqref{appendix_lem:averbound} in Lemma \ref{appendix_lem:pertbconsensus}. Thus we obtain
\begin{align*}
\sum_{k=0}^\infty \alpha(k)\,|\,\lambda_i(k)-\bar{\lambda}(k)\,| < \infty \text{ a.s. for all } i\in\Vcal,
\end{align*}
which implies that
\begin{align}
\sum_{k=0}^\infty \alpha^2(k)+ \sum_{k=0}^\infty\alpha(k)\|\bld(k)-\bar{\lambda}(k)\1\| < \infty \text{ a.s.}\nonumber
\end{align}
Thus, applying Lemma \ref{appendix_lem:StochasticConv} to Eq.\ \eqref{appendix_DSLM:Eq2} yields
\begin{align*}
&\Big\{|\,\bar{\lambda}(k)-\lambda^*\,|\Big\} \text{ converges a.s. for each } \lambda^*,\\
&\sum_{k=0}^\infty \alpha(k) \Big(q(\bar{\lambda}(k))-q^*\Big) < \infty \text{ a.s.},
\end{align*}
which since $\sum_{k=0}^\infty\alpha(k)=\infty$ implies
\begin{align}
\liminf_{k\rightarrow\infty} q(\bar{\lambda}(k)) = q^* \text{ a.s.}\label{appendix_DSLM:Eq3c}
\end{align}
Let $\{\bar{\lambda}(k_{\ell})\}$ be a subsequence of $\{\bar{\lambda}(k)\}$ such that 
\begin{align*}
\lim_{\ell\rightarrow\infty} q(\bar{\lambda}(k_{\ell})) = \liminf_{k\rightarrow\infty} q(\bar{\lambda}(k)) = q^* \text{ a.s.}
\end{align*}
Since $\{|\,\bar{\lambda}(k)-\lambda^*\,|\}$ converges, the sequence $\{\bar{\lambda}(k_{\ell})\}$ is bounded. Hence, there is a convergent subsequence of $\{\bar{\lambda}(k_{\ell})\}$. Since $\lim_{\ell\rightarrow\infty} q(\bar{\lambda}(k_{\ell})) = q^*$ a.s., this subsequence converges to a minimizer $\tilde{\lambda}$ of {\sf DP} a.s. In addition, since $\Big\{|\,\bar{\lambda}(k)-\lambda^*\,|\Big\} \text{ converges a.s. for each } \lambda^*$, we obtain
\begin{align*}
\lim_{k\rightarrow\infty}\bar{\lambda}(k) = \lambda^*\text{ a.s. },
\end{align*}
which together with Eq.\ \eqref{appendix_lem:averconv} implies part (a).
% \begin{align*}
% \lim_{k\rightarrow\infty}\lambda_i(k) = \lambda^*\text{ a.s. },\qquad \text{ for all }i\in\Vcal.
% \end{align*}
\end{proof}

%\item 
\begin{proof}[\textit{Part (b)}] Recall that $(\xbf^*,\lambda^*)$ is a saddle point of the Lagrangian \eqref{alg:Lagrangian}. Since $x_i(k)$ satisfies Eq.\ \eqref{DSLM:stepMinimize} and by using Eq.\ \eqref{appendix:StociLagrangian} we have for all $i\in\Vcal$  and $k\geq 0$
\begin{align*}
0 &\leq\,\Lcal_i^s(x_i^*,v_i(k),\ell_i(k-1)) - \Lcal_i^s(x_i(k),v_i(k),\ell_i(k-1)),
\end{align*}
which when summing over $i$ implies
{\small
\begin{align}
&0\leq\sum_{i=1}^n\mathcal{L}_i^s(x_i^*,v_i(k),\ell_i(k-1))\mathcal{L}_i^s(x_i(k),v_i(k),\ell_i(k-1))\nonumber\\
&= \sum_{i=1}^n f_i(x_i^*) + v_i(k)(x_i^*-\ell_i(k-1))\nonumber\\
&\qquad - \sum_{i=1}^n f_i(x_i(k)) + v_i(k)(x_i(k)-\ell_i(k-1)).\label{appendix_DSLM:Eq4}
\end{align}}
First, the strong duality implies
\begin{align}
\sum_{i=1}^n f_i(x_i^*) = -\sum_{i=1}^nq_i(\lambda^*).\label{appendix_DSLM:Eq5}
\end{align}
Second, recall from \eqref{alg:mindual} that
\begin{align}
q_i(v_i(k)) =  - f_i(x_i(k)) - v_i(k)(x_i(k)-b_i).\label{appendix_DSLM:Eq6}
\end{align}
Moreover, by Assumption \ref{assump:boundednoise} $\mathbb{E}[\ell_i(k)] = b_i$, for all $i\in\Vcal$. Taking the expectation of both sides in Eq.\ \eqref{appendix_DSLM:Eq4} with respect to $\mathcal{F}_{k-1}$ and using Eqs.\ \eqref{appendix_DSLM:Eq5} and \eqref{appendix_DSLM:Eq6} we have
\begin{align*}
&0\leq\sum_{i=1}^n\Eset\Big[\,\Lcal_i^s(x_i^*,v_i(k),\ell_i(k-1))\,|\,\Fcal_{k-1}\,\Big]\nonumber\\
&\quad\qquad- \Eset\Big[\,\Lcal_i^s(x_i(k),v_i(k),\ell_i(k-1))\,|\,\Fcal_{k-1}\,\Big]\nonumber\\
&=\mathbb{E}\Big[\, \sum_{i=1}^n q_i(v_i(k)) - q_i(\lambda^*)\,|\,\Fcal_{k-1}\Big]+ \sum_{i=1}^n v_i(k)(x_i^*-b_i)\nonumber\\
&\leq \sum_{i=1}^n D_i\, |\,\lambda^* - v_i(k)\,| + \sum_{i=1}^n v_i(k)(x_i^*-b_i)\notag\\
&\leq D\, \|\,\bld(k-1)-\lambda^*\1\,\| + \sum_{i=1}^n v_i(k)(x_i^*-b_i),
\end{align*}
which by taking the expectation and letting $k\rightarrow\infty$ we obtain
\begin{align}
0&\;\leq\lim_{k\rightarrow\infty}\sum_{i=1}^n\Eset\Big[\,\Lcal_i^s(x_i^*,v_i(k),\ell_i(k-1))\,\Big]\nonumber\\
&\quad\qquad-\lim_{k\rightarrow\infty} \Eset\Big[\,\Lcal_i^s(x_i(k),v_i(k),\ell_i(k-1))\,\Big]\nonumber\\
&\leq \lim_{k\rightarrow\infty}D\, \Eset\Big[\,\|\,\bld(k-1)-\lambda^*\1\,\|\Big]\notag\\ 
&\qquad\quad + \lim_{k\rightarrow\infty}\Eset\Big[\sum_{i=1}^n v_i(k)(x_i^*-b_i)\Big]\nonumber\\
&= 0 \ \ \text{a.s.} ,\label{appendix_DSLM:Eq7}
\end{align}
where the last equality is due to $$\lim_{k\rightarrow\infty}\sum_{i=1}^nv_i(k)(x_i^*-b_i) = \sum_{i=1}^n\lambda^*(x_i^*-b_i) = 0\text{ a.s.}$$ By Assumption \ref{assump:boundednoise}, and Eqs.\ \eqref{appendix:iLagrangian} and \eqref{appendix:StociLagrangian} we have 
\begin{align*}
&\lim_{k\rightarrow\infty}\sum_{i=1}^n\Eset\Big[\,\Lcal_i^s(x_i^*,v_i(k),\ell_i(k-1))\,\Big]\notag\\ 
&\qquad= \lim_{k\rightarrow\infty}\sum_{i=1}^n\Eset\Big[\,\Lcal_i(x_i^*,v_i(k))\,\Big] = f^*,
\end{align*}
where we use $\lim_{k\rightarrow \infty} v_i(k) = \lambda^*$  a.s. This together with Eq.\ \eqref{appendix_DSLM:Eq7} implies part (b).
\end{proof}
%\end{enumerate}

Finally, we present the proof of Theorem \ref{alg_DSLM:SDconvrate}.

\begin{proof}[Proof of Theorem \ref{alg_DSLM:SDconvrate}]
First, summing up both sides of Eq.\ \eqref{appendix_DSLM:Eq2} over $k=0,\ldots,K$ for some $K\geq 1$ we have
\begin{align}
&\Eset\Big[\big(\bar{\lambda}(K+1) - \lambda^*\big)^2\,|\,\Fcal_k\Big]\notag\\
&\leq \Big(\bar{\lambda}(0)-\lambda^*\Big)^2 + \frac{4D}{n}\sum_{k=0}^{K}\alpha(k)\|\,\bld(k) - \bar{\lambda}(k)\1\|\nonumber\\
&\qquad+ \frac{D^2}{n}\sum_{k=0}^{K}\alpha^2(k) - \frac{2}{n}\sum_{k=0}^{K}\alpha(k)\Big( q(\bar{\lambda}(k)) - q^*\Big).\label{appendix_DSLM:Eq8}
\end{align}
Next, using Eq.\ \eqref{appendix:ipertbAver} with $$\epsilon_i(k) = -\alpha(k)g_i(v_i(k+1))$$ and by Eq.\ \eqref{appendix_lem:averbound} we have for some $K\geq 1$
\begin{align*}
&\sum_{k=0}^K \alpha(k)\,|\,\bld(k)-\bar{\lambda}(k)\1\,|\nonumber\\
&\leq \|\bld(0)\|\sum_{k=0}^K \delta^k\alpha(k) + D\sum_{k=0}^K\alpha(k)\sum_{t=0}^{k}\delta^{k-t}\alpha(t),
\end{align*}
which since $\delta < 1$ and $\alpha(k)=1/\sqrt{k+1}$ for $k \geq 0$ we obtain
\begin{align}
&\sum_{k=0}^K \alpha(k)\,\|\,\bld(k)-\bar{\lambda}(k)\1\,\|\nonumber\\
&\qquad \leq \|\bld(0)\|\sum_{k=0}^K \delta^k+ D\sum_{k=0}^K\sum_{t=0}^{k}\delta^{k-t}\alpha^2(t)\notag\\
&\qquad \leq \frac{\|\bld(0)\|}{1-\delta}+ D\sum_{k=0}^K\sum_{t=0}^{k}\frac{\delta^{k-t}}{t+1}\nonumber\\
&\qquad = \frac{\|\bld(0)\|}{1-\delta} + D \sum_{t=0}^K\frac{1}{t+1}\sum_{\ell=0}^{K-t}\delta^{\ell}\nonumber\\
%&\qquad \leq \frac{\|\bld(0)\|}{1-\delta}+ \frac{D}{1-\delta}\sum_{t=0}^K\frac{1}{t+1}\displaybreak[0]\nonumber\\
&\qquad \leq\frac{\|\bld(0)\|}{1-\delta}+ \frac{D(1+\ln(K+1))}{1-\delta},\label{appendix_DSLM:Eq9}
\end{align}
where the last inequality is due to the integral test
\begin{align}
\sum_{t=0}^K\frac{1}{t+1} \leq 1+\int_{0}^K\frac{du}{u+1} = 1 + \ln(K+1).\label{appendix_DSLM:Eq9a}
\end{align}
Substituting Eqs.\ \eqref{appendix_DSLM:Eq9} and  \eqref{appendix_DSLM:Eq9a} into Eq.\ \eqref{appendix_DSLM:Eq8} yields 
 \begin{align*}
&\Eset\Big[\big(\bar{\lambda}(K+1) - \lambda^*\big)^2\,|\,\Fcal_k\Big]\notag\\
%&\qquad\leq \Big(\bar{\lambda}(0)-\lambda^*\Big)^2 + \frac{D^2}{n}\sum_{k=0}^{K}\frac{1}{k+1}\notag\\
%&\qquad\qquad + \frac{\|\bld(0)\|}{1-\delta}+ \frac{D(1+\ln(K+1))}{1-\delta}\nonumber\\
%&\qquad\qquad - \frac{2}{n}\sum_{k=0}^{K}\alpha(k)\Big( q(\bar{\lambda}(k)) - q^*\Big)\nonumber\\
&\leq\Big(\bar{\lambda}(0)-\lambda^*\Big)^2 + \frac{D^2(1+\ln(K+1))}{n}+ \frac{4D\|\bld(0)\|}{n(1-\delta)}\notag\\
&\quad+ \frac{4D^2(1+\ln(K+1))}{n(1-\delta)}- \frac{2}{n}\sum_{k=0}^{K}\alpha(k)\Big( q(\bar{\lambda}(k)) - q^*\Big),
\end{align*}
which when taking expectation of both sides implies
\begin{align*}
&\Eset\Big[\big(\bar{\lambda}(K+1) - \lambda^*\big)^2\Big]\notag\\
&\qquad \leq\Eset\Big[\Big(\bar{\lambda}(0)-\lambda^*\Big)^2\Big] + \frac{5D^2(1+\ln(K+1))}{n(1-\delta)}\notag\\
&\qquad\qquad + \frac{4D\Eset\Big[\|\bld(0)\|\Big]}{n(1-\delta)} - \frac{2}{n}\sum_{k=0}^{K}\alpha(k)\Eset\Big[ q(\bar{\lambda}(k)) - q^*\Big].
\end{align*} 
Dividing both sides of the preceding relation by $2/n\sum_{k=0}^{K}\alpha(k)$ and rearranging the terms we have
\begin{align}
&\sum_{k=0}^K \frac{\alpha(k) \mathbb{E}\Big[q(\bar{\lambda}(k))\Big]}{\sum_{k=0}^K\alpha(k)}-q^*\nonumber\\
&\qquad \leq \frac{\Eset\Big[n\Big(\bar{\lambda}(0)-\lambda^*\Big)^2\Big]}{2(1-\delta)\sum_{k=0}^K\alpha(k)} + \frac{5D^2(1+\ln(K+1))}{2(1-\delta)\sum_{k=0}^K\alpha(k)}\notag\\
&\qquad\qquad + \frac{2D\Eset\Big[\|\bld(0)\|\Big]}{(1-\delta)\sum_{k=0}^K\alpha(k)}\cdot
\label{appendix_DSLM:Eq10}
\end{align}
Since $\alpha(k) = 1/\sqrt{k+1}$ we have
\begin{align*}
\sum_{k=0}^{K}\alpha(k)= \sum_{k=0}^{K}\frac{1}{\sqrt{k+1}}\geq \int_{0}^{K+1}\frac{du}{\sqrt{u+1}} \geq \sqrt{K+1},
\end{align*}
which when substituting into Eq.\ \eqref{appendix_DSLM:Eq10}, and applying Jensen's inequality we obtain
\begin{align}
&\Eset\left [ q\left(
\frac{\sum_{k=0}^K\alpha(k)\bar{\lambda}(k)}{\sum_{k=0}^K\alpha(k)}\right)\right] - q^*
\nonumber\\
&\qquad \leq \frac{n\Eset\Big[\Big(\bar{\lambda}(0)-\lambda^*\Big)^2\Big]}{2\sqrt{K+1}} + \frac{5D^2(1+\ln(K+1))}{2(1-\delta)\sqrt{K+1}}\notag\\
&\qquad\qquad + \frac{2D\Eset\Big[\|\bld(0)\|\Big]}{(1-\delta)\sqrt{K+1}}\cdot\label{appendix_DSLM:Eq11}
\end{align}
By \eqref{alg_DSLM:yiUpdate}, it is straightforward to verify that
\begin{align*}
y_i(K+1)
%&= \frac{\alpha(K)\lambda_i(K) + S(K)y_i(K)}{S(K+1)}\nonumber\\
%& = \frac{\alpha(k)\lambda_i(k)+\alpha(k-1)\lambda_i(k-1) + S(k-1)y_i(k-1)}{S(k+1)}\nonumber\\
&= \frac{\sum_{k=0}^{K}\alpha(k)\lambda_i(k)}{\sum_{k=0}^{K}\alpha(k)}\quad \text{ for all } k\geq 0.
\end{align*}
Thus by the Lipschitz continuity of $q_i$ we have
\begin{align}
&\mathbb{E}\Big[q(y_i(K+1)) - q\left(\begin{array}{l}
\frac{\sum_{k=0}^K\alpha(k)\bar{\lambda}(k)}{\sum_{k=0}^K\alpha(k)}\end{array}\right)\Big]\nonumber\\
&\leq \Eset\left[\sum_{i=1}^n D_i\left|\frac{\sum_{k=0}^{K}\alpha(k)\lambda_i(k)}{\sum_{k=0}^{K}\alpha(k)}-\frac{\sum_{k=0}^K\alpha(k)\bar{\lambda}(k)}{\sum_{k=0}^K\alpha(k)}\right|\right]\nonumber\\
&\leq \frac{D}{\sqrt{K+1}}\sum_{k=0}^K\alpha(k)\Eset\Big[\,\|\bld(k)-\bar{\lambda}(k)\1\|\,\Big]\nonumber\\
&\overset{\eqref{appendix_DSLM:Eq9}}{\leq} \frac{D\Eset\Big[\|\bld(0)\|\Big]}{(1-\delta)\sqrt{K+1}}+ \frac{D^2(1+\ln(K+1))}{(1-\delta)\sqrt{K+1}}\cdot\label{appendix_DSLM:Eq12}
\end{align}
Thus, by adding Eq.\ \eqref{appendix_DSLM:Eq12} to Eq.\ \eqref{appendix_DSLM:Eq11},  we obtain Eq.\ \eqref{alg_DSLM:functionrate}.

\end{proof}

\end{document}